\DeclareMathOperator*{\concat}{\scalerel*{\Vert}{\sum}}
\newcommand{\mb}[1]{\mathbb{#1}}
\newcommand{\mc}[1]{\mathcal{#1}}
\theoremstyle{definition}
\newtheorem{definition}{Definition}
\theoremstyle{plain}
\newtheorem{lemma}{Lemma}
\theoremstyle{plain}
\newtheorem{proposition}{Proposition}
\theoremstyle{plain}
\newtheorem{theorem}{Theorem}
\theoremstyle{plain}
\newtheorem{corollary}{Corollary}[theorem]
\title{Generalized results on the convergence\\ of Thue-Morse turtle curves}
\author{Leif Schaumann}
\date{December 2024}
\begin{document}
\maketitle

\begin{abstract}
    Work by Ma and Holdener in 2005 \cite{ma_holdener} revealed that using turtle graphics to visualize the Thue-Morse sequence can result in curves which approximate the Koch fractal curve. A 2007 paper by Allouche and Skordev \cite{revisited} pointed out that this phenomenon is connected to certain complex sums considered by Dekking in 1982 \cite{dekking}. We make this connection explicit by showing that a broad class of Thue-Morse turtle curves will periodically coincide with sums of the form considered by Dekking, and we use this result to prove that scaled versions of these curves can converge to various fractal curves in the Hausdorff metric. In particular, we confirm a conjecture by Zantema \cite{zantema}, giving a condition under which any Thue-Morse turtle curve will converge to the Koch curve.
\end{abstract}

\section{Introduction}

The Thue-Morse sequence is one of the most well known morphic sequences; it appears in many places across mathematics (see \cite{ubiquitous} for examples). It is an infinite sequence of 0's and 1's which can be defined by a simple replacement rule: replace every 0 with 01 and every 1 with 10. Starting with a single 0, the limit of this replacement process produces the infinite Thue-Morse sequence
$$(t_2)=0,1,1,0,1,0,0,1,1,0,0,1,0,1,1,0,1,0,0,1,0,1,1,0,0,1,1,0...$$

A common way of visualizing such a sequence is through turtle graphics. We place a ``turtle'' in the plane at the origin, facing east. We then give the turtle a sequence of instructions, each of which consists of a translation of the turtle's position and a rotation of the turtle's heading. For each instruction, the translation is applied before the rotation, and the translation is relative to the turtle's current heading. We can visualize the turtle's movements as a series of connected line segments in the plane called a \textit{turtle curve}. 

If we want to use this method to visualize the Thue-Morse sequence, we just need to pick two instructions for the turtle to execute according to the appearance of 0's and 1's in the sequence. We call a turtle curve produced in this way a \textit{Thue-Morse turtle curve}. For example, say that every time a 0 appears in the Thue-Morse sequence, we have the turtle move forward by 1 and then rotate by $\frac{\pi}{3}$, and every time a 1 appears in the Thue-Morse sequence, we have the turtle move forward by 1 and then rotate by $-\frac{\pi}{3}$. This assignment of instructions produces the turtle curve shown in Figure \ref{fig:simple_tmtc}.

\begin{figure}[H]
    \centering
    \includegraphics[scale=0.6]{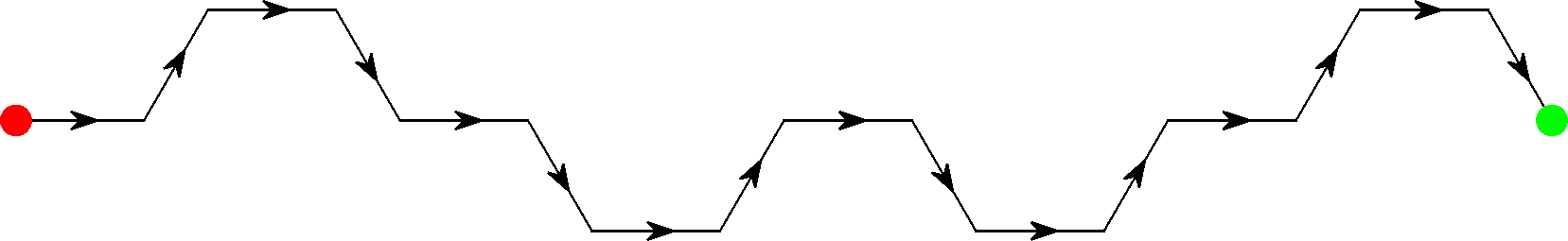}
    \caption{The first 16 steps of a Thue-Morse turtle curve where the turtle turns by $\frac{\pi}{3}$ when reading a 0 and by $-\frac{\pi}{3}$ when reading a 1. The origin is marked in red and the current position of the turtle is marked in green.}
    \label{fig:simple_tmtc}
\end{figure}

The Thue-Morse turtle curve in Figure \ref{fig:simple_tmtc} is not particularly remarkable; it continues forever in an essentially linear path. However, a wide range of other behaviors can be achieved depending on the assignment of instructions. A 2005 paper by Ma and Holdener \cite{ma_holdener} revealed that scaled iterations of certain Thue-Morse turtle curves converge to the famous Von Koch fractal curve in the Hausdorff metric, as seen in Figure \ref{fig:koch_convergence}. This was a surprising connection between two well known mathematical objects, making it worthy of further investigation.

\begin{figure}[H]
    \centering
    \includegraphics[scale=0.5]{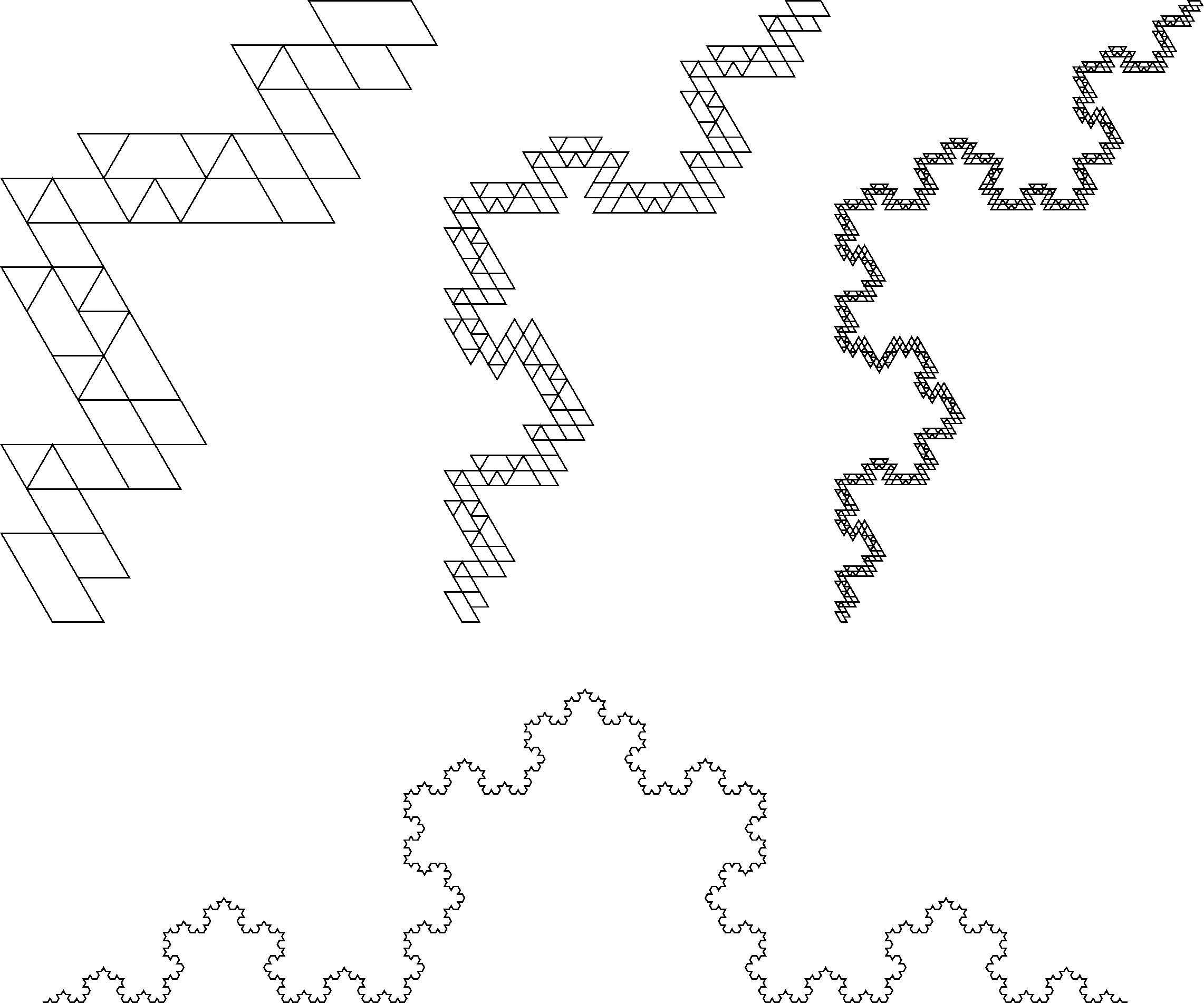}
    \caption{Three scaled iterations of the Thue-Morse turtle curve considered by Ma and Holdener. In this turtle curve, the turtle moves forward 1 unit whenever reading a 0, and turns left by $\frac{\pi}{3}$ whenever reading a 1. A classic approximation of the Koch curve is shown below.}
    \label{fig:koch_convergence}
\end{figure}

A 2007 paper by Allouche and Skordev pointed out that the connection between Thue-Morse sequence and the Koch curve was closely related to a paper by Dekking from 1982 \cite{revisited}. Dekking's paper considered sums of complex numbers of the form 
$$Z(N,p,q)=\sum_{n=0}^{N-1}\zeta_p^{s_p(n)}\zeta_q^n,$$
where $p$ and $q$ are integers with $p,q\geq2$, $\zeta_p=e^{\frac{2\pi i}{p}}$, $\zeta_q=e^{\frac{2\pi i}{q}}$, and $s_p(n)$ is the sum of the digits of $n$ when written in base $p$. Allouche and Skordev gave an overview of how these sums are relevant to the Thue-Morse sequence and Koch curve connection, but they did not provide explicit results or fully explore possible generalizations. In this paper, we aim to fill in those details and investigate the extent of this connection. We will find explicit relations between Dekking's sums and Thue-Morse turtle curves, and use these relations to prove the convergence of various Thue-Morse turtle curves to fractal curves such as the Koch curve. 

\section{Preliminaries}

We start by recalling some concepts from combinatorics on words, which will be used to define the Thue-Morse sequence and other relevant sequences. We will then establish a flexible framework for defining turtle curves.

\subsection{Words and Sequences}

For a finite alphabet $A$, we denote $A^*$ to be the set of all finite words over $A$, and $\epsilon$ to be the empty word. The concatenation of the words $u$ and $v$ will be written $uv$ or $u\concat v$. Also, for any word $w\in A^*$, let $|w|$ denote the length of $w$, and let $|w|_a$ denote the number of occurrences of the symbol $a$ in the word $w$. A \textit{sequence} $\sigma$ over the alphabet $A$ is any function $\sigma:\mb{N}\to A$. For our purposes, $\mb{N}$ includes 0.

In order to extract a symbol from a word at a certain position, we will use a notation inspired by computer programming syntax. We denote $w[n]$ to be the symbol in the word $w$ at position $n$. The first symbol in the word is considered to be in position 0.

To extract a word from a sequence $\sigma$, we introduce the following. For any $a,b\in\mb{N}$, $a\leq b$, let $\sigma[a:b]$ denote the word formed with the symbols of $\sigma$ starting at index $a$ and ending at index $b$, including $\sigma(a)$ and excluding $\sigma(b)$. More formally, $\sigma[a:b]=\concat_{j=a}^{b-1}\sigma(j)$, where $\concat$ is the concatenation operation. If $a=b$, $\sigma[a:b]=\epsilon$. We most commonly have $a=0$, so we will use $\sigma[:b]$ as shorthand for $\sigma[0:b]$.

A \textit{morphism} is any function $\phi:A^*\to A^*$ with $\phi(uv)=\phi(u)\phi(v)$ for all $u,v\in A^*$. Note that with this property, describing where $\phi$ sends each symbol in $A^*$ is enough to fully define the morphism. If there exists some symbol $a\in A$ such that $\phi(a)=aw$ for some word $w$, then we say $\phi$ is \textit{prolongable} on $a$, and the infinite word $\phi^\omega(a)=aw\phi(w)\phi^2(w)\phi^3(w)\phi^4(w)...$ is the unique fixed point of $\phi$ starting with $a$. We call a morphism $\phi$ \textit{k-uniform} if $|\phi(a)|=k$ for all $a\in A$ for some $k\in\mb{N}$  \cite{auto_sequences}. For an infinite word $\phi^\omega(a)$ and sequence $\sigma$, we will abuse notation slightly by using $\phi^\omega(a)=\sigma$ to mean that $\phi^\omega(a)[n]=\sigma(n)$ for all $n\in\mb{N}$.

The following lemma will be useful in many of our calculations. It allows us to think of the fixed point of a $k$-uniform morphism as a sequence with a recurrence relation.

\begin{lemma}\label{lem:morphism_is_recurrence}
Let $A$ be a finite alphabet and $\phi:A^*\to A^*$ be a $k$-uniform morphism with $k\geq2$ which is prolongable on some $a\in A$.  Then for all $m,r\in\mb{Z}$ with $0\leq r<k$, $$\phi^\omega(a)[mk+r]=\phi(\phi^\omega(a)[m])[r].$$
\end{lemma}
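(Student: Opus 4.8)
The plan is to prove the identity by exploiting two facts: that $\phi^\omega(a)$ is assembled from the nested finite prefixes $\phi^n(a)$, and that $k$-uniformity forces $\phi$ to act on any word by cutting it into blocks of equal length $k$. Writing $\sigma=\phi^\omega(a)$ for brevity, I would first isolate the elementary \emph{block-decomposition} fact for finite words: since $\phi(u)=\phi(u[0])\phi(u[1])\cdots\phi(u[|u|-1])$ and each factor $\phi(u[j])$ has length exactly $k$, the image $\phi(u)$ splits into consecutive blocks of length $k$, the $j$-th of which is $\phi(u[j])$ and occupies positions $jk,jk+1,\dots,jk+k-1$. Consequently, for any $j$ with $0\leq j<|u|$ and any $r$ with $0\leq r<k$,
$$\phi(u)[jk+r]=\phi(u[j])[r].$$
This is really the heart of the statement; the remaining work is to transport it from finite words to the infinite fixed point $\sigma$.

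To make that transport, I would use the prefix structure of the iterates. From $\phi(a)=aw$ one checks by induction, using $\phi^{n+1}(a)=\phi(\phi^n(a))$, that each $\phi^n(a)$ is a prefix of $\phi^{n+1}(a)$, and hence a prefix of $\sigma$; moreover, since $|\phi(u)|=k|u|$ for every word $u$, uniformity gives $|\phi^n(a)|=k^n$. Fixing $m\in\mb{N}$ and $r$ with $0\leq r<k$, I would choose $n$ large enough that $k^n>m$, so that $\phi^n(a)[m]$ is defined and, being read from a prefix of $\sigma$, equals $\sigma[m]$.

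Applying the block-decomposition fact to $u=\phi^n(a)$ then yields
$$\phi^{n+1}(a)[mk+r]=\phi\big(\phi^n(a)\big)[mk+r]=\phi\big(\phi^n(a)[m]\big)[r]=\phi(\sigma[m])[r].$$
It remains only to identify the left-hand side with $\sigma[mk+r]$, and this is the one place requiring a careful index check: since $m\leq k^n-1$ and $r\leq k-1$, we have $mk+r\leq k^{n+1}-1<k^{n+1}=|\phi^{n+1}(a)|$, so the position $mk+r$ genuinely lies inside the prefix $\phi^{n+1}(a)$ of $\sigma$, giving $\phi^{n+1}(a)[mk+r]=\sigma[mk+r]$. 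Chaining the equalities completes the proof. I expect no deep obstacle here; the only subtlety is exactly this bookkeeping, ensuring every index stays within the relevant prefix, which is precisely where $k$-uniformity (the equal block lengths) is indispensable — without it the clean arithmetic relating position $mk+r$ to block $m$ at offset $r$ would fail.
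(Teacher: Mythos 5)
Your proof is correct and rests on the same core observation as the paper's: $k$-uniformity splits $\phi(u)$ into consecutive length-$k$ blocks, so position $mk+r$ of the image lands at offset $r$ inside the image of the $m$-th symbol. The only difference is bookkeeping — you transport this fact to the fixed point via the finite iterates $\phi^n(a)$ with $k^n>m$, whereas the paper applies it directly to the prefix $\phi^\omega(a)[:m+1]$ using the identity $\phi(\phi^\omega(a)[:n])=\phi^\omega(a)[:nk]$ — and both versions of the index check go through.
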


\begin{proof}
    First note a general fact about $k$-uniform morphisms, which is that for any $n\in\mb{N}$, $\phi(\phi^\omega(a)[:n])=\phi^\omega(a)[:nk]$. This is because $\phi^\omega(a)$ is a fixed point of $\phi$, and since $\phi$ is $k$-uniform, applying $\phi$ to a word of length $n$ gives a word of length $nk$. We will apply this fact twice in the following calculation.
    \begin{align}
        \phi^\omega(a)[mk+r]&=\phi^\omega(a)[:mk+k][mk+r]\\
        &=\phi(\phi^\omega(a)[:m+1])[mk+r]\\
        &=\phi(\phi^\omega(a)[:m]\phi^\omega(a)[m])[mk+r]\\
        &=\phi(\phi^\omega(a)[:m])\phi(\phi^\omega(a)[m])[mk+r]\\
        &=\phi^\omega(a)[:mk]\phi(\phi^\omega(a)[m])[mk+r]\\
        &=\phi(\phi^\omega(a)[m])[r]
    \end{align}
    The first equality holds because $r<k$, so $0\leq mk+r<mk+k$. The second and fifth equalities follow from the fact noted above. The final equality is simply removing the first $mk$ symbols from the word, and shifting the indexing value by $mk$ to account for this. 
\end{proof}

\subsection{The Thue-Morse Sequence}

We first define the Thue-Morse sequence by a recurrence relation.

\begin{definition}
    The \textit{Thue-Morse sequence} is the sequence $t_2:\mb{N}\to\mb{Z}/2\mb{Z}$ is defined as follows. Let $t_2(0)=0$. For any $m\in\mb{N}$ and integer $0\leq r<2$, let $t_2(2m+r)=t_2(m)+\overline{r}$, where $\overline{r}$ is the equivalence class of $r$ modulo 2.
\end{definition}

A common, equivalent definition of the Thue-Morse sequence is as the fixed point of a 2-uniform morphism.

\begin{proposition}\label{prop:t_2_equivalent_defs}
    For the alphabet $A_2=\mb{Z}/2\mb{Z}$, let $\phi:A^*\to A^*$ be the morphism defined by $\phi(0)=01$ and $\phi(1)=01$. Then $\phi^\omega(0)=t_2$, where $t_2$ is the \textit{Thue-Morse sequence}.
\end{proposition}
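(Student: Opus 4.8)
The plan is to prove that the two definitions of $t_2$ agree at every index by showing that the morphic fixed point obeys the very recurrence used to define $t_2$ in the preceding Definition. Lemma~\ref{lem:morphism_is_recurrence} is precisely the tool for this: with $k=2$ it expresses $\phi^\omega(0)[2m+r]$ in terms of $\phi^\omega(0)[m]$, which mirrors the defining relation $t_2(2m+r)=t_2(m)+\overline{r}$.

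First I would write $u=\phi^\omega(0)$ and prove $u[n]=t_2(n)$ for all $n\in\mb{N}$ by strong induction on $n$. The base case $n=0$ is immediate: since $\phi$ is prolongable on $0$, the word $u$ begins with $0$, so $u[0]=0=t_2(0)$.

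For the inductive step, fix $n\geq1$ and write $n=2m+r$ with $0\leq r<2$. Because $n\geq1$ we have $m=\lfloor n/2\rfloor<n$, so the inductive hypothesis gives $u[m]=t_2(m)$. Applying Lemma~\ref{lem:morphism_is_recurrence} with $k=2$ and $a=0$ then yields $u[n]=u[2m+r]=\phi(u[m])[r]=\phi(t_2(m))[r]$.

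The only real content is the observation that the morphism images encode the recurrence exactly: for every $b\in\mb{Z}/2\mb{Z}$ and every $r\in\{0,1\}$ one has $\phi(b)[r]=b+\overline{r}$. This is checked by inspecting the four cases arising from $\phi(0)=01$ and $\phi(1)=10$ (for instance $\phi(1)[1]=0=1+\overline{1}$). Substituting this identity gives $u[n]=t_2(m)+\overline{r}$, which is exactly $t_2(2m+r)=t_2(n)$ by definition, closing the induction. The main obstacle—indeed essentially the whole argument—is recognizing this identity $\phi(b)[r]=b+\overline{r}$, after which Lemma~\ref{lem:morphism_is_recurrence} converts the morphic definition into the digit recurrence verbatim and everything else is bookkeeping.
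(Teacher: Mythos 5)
Your proof is correct and takes essentially the same approach as the paper: the paper defers this statement to the general Proposition \ref{prop:t_p_equivalent_defs}, whose proof likewise applies Lemma \ref{lem:morphism_is_recurrence} (with $k=p$) to show that the morphic fixed point satisfies the defining recurrence and then concludes from the common initial symbol, exactly as you do with $k=2$. You also silently correct the statement's typo $\phi(1)=01$ to the intended $\phi(1)=10$, which is the right reading.
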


We save the proof of Proposition \ref{prop:t_2_equivalent_defs} for the more general case covered by Proposition \ref{prop:t_p_equivalent_defs}, but a proof can also be found in \cite{ubiquitous}. Both definitions of the Thue-Morse sequence yield $$(t_2)=0,1,1,0,1,0,0,1,1,0,0,1,0,1,1,0,1,0,0,1,0,1,1,0,0,1,1,0...$$

We will also consider a generalization of the Thue-Morse sequence which allows for sequences using more than 2 symbols. Although our main result will only apply to turtle curves generated by the 2-symbol Thue-Morse sequence, many of our intermediate results will still apply with this generalization.

\begin{definition}
    For any $p\geq2$, the \textit{generalized Thue-Morse sequence over $p$ symbols} $t_p:\mb{N}\to\mb{Z}/p\mb{Z}$ is defined as follows. Let $t_p(0)=0$. For any $m\in\mb{N}$ and integer $0\leq r<p$, let $t_p(pm+r)=t_p(m)+\overline{r}$, where $\overline{r}$ is the equivalence class of $r$ modulo $p$.
\end{definition}

Note that this is equivalent to Definition 1 when $p=2$. Here are some examples of generalized Thue-Morse sequences with $p=3$, $p=4$, and $p=5$.
\begin{align*}
    (t_3)&=0,1,2,1,2,0,2,0,1,1,2,0,2,0,1,0,1,2,2,0,1,0,1,2,1,2,0...\\
    (t_4)&=0,1,2,3,1,2,3,0,2,3,0,1,3,0,1,2,1,2,3,0,2,3,0,1,3,0,1...\\
    (t_5)&=0,1,2,3,4,1,2,3,4,0,2,3,4,0,1,3,4,0,1,2,4,0,1,2,3,1,2...
\end{align*}
Like the 2-symbol Thue-Morse sequence, the generalized Thue-Morse sequences can be defined equivalently as the fixed point of a morphism.

\begin{proposition}\label{prop:t_p_equivalent_defs}
    Let $p$ be an integer with $p\geq2$. Treat $A_p=\mb{Z}/p\mb{Z}$ as an alphabet. Let $\phi:A_p^*\to A_p^*$ be the morphism defined by $\phi(a)=\concat_{j=0}^{p-1}(a+\overline{j})$. Then $\phi^\omega(0)=t_p$, where $t_p$ is the generalized Thue-Morse sequence over $p$ symbols.
\end{proposition}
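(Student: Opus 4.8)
The plan is to show that the fixed point $\phi^\omega(0)$ satisfies exactly the same base case and recurrence relation that define $t_p$, and then to conclude equality by induction. First I would verify that $\phi$ is genuinely a $p$-uniform morphism prolongable on $0$, so that $\phi^\omega(0)$ is well-defined and Lemma \ref{lem:morphism_is_recurrence} applies with $k=p$. Since $\phi(a)=\concat_{j=0}^{p-1}(a+\overline{j})$ has length $p$ for every symbol $a$, the morphism is $p$-uniform; and because the first letter of $\phi(0)$ is $0+\overline{0}=0$, it is prolongable on $0$. In particular $\phi^\omega(0)[0]=0=t_p(0)$, so the base cases agree.

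The key computation is to identify the symbol $\phi(a)[r]$. By definition of $\phi$, the word $\phi(a)$ lists $a+\overline{0},\,a+\overline{1},\,\ldots,\,a+\overline{p-1}$ in order, so its symbol in position $r$ is simply $\phi(a)[r]=a+\overline{r}$ for each $0\le r<p$. Combining this with Lemma \ref{lem:morphism_is_recurrence} applied with $k=p$ gives, for every $m\in\mb{N}$ and $0\le r<p$,
$$\phi^\omega(0)[pm+r]=\phi\big(\phi^\omega(0)[m]\big)[r]=\phi^\omega(0)[m]+\overline{r}.$$
This is precisely the recurrence $t_p(pm+r)=t_p(m)+\overline{r}$ from the definition of the generalized Thue-Morse sequence.

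Finally I would close the argument by strong induction on $n$, showing $\phi^\omega(0)[n]=t_p(n)$ for all $n\in\mb{N}$. The base case $n=0$ is handled above. For $n\ge1$, write $n=pm+r$ with $0\le r<p$; since $p\ge2$ one checks $m<n$, so the induction hypothesis yields $\phi^\omega(0)[m]=t_p(m)$. Then the displayed recurrence together with the defining recurrence for $t_p$ gives $\phi^\omega(0)[n]=\phi^\omega(0)[m]+\overline{r}=t_p(m)+\overline{r}=t_p(n)$, completing the induction and hence the proof. I do not expect any serious obstacle here: essentially all of the content is already packaged into Lemma \ref{lem:morphism_is_recurrence}, and the only thing to check directly is the trivial fact that the $r$-th letter of $\phi(a)$ is $a+\overline{r}$. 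The single point that warrants a moment's care is confirming the strict inequality $m<n$ for $n\ge1$, which is what makes the strong induction valid.
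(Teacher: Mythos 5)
Your proposal is correct and follows essentially the same route as the paper: verify the base case, use Lemma \ref{lem:morphism_is_recurrence} with $k=p$ to show $\phi^\omega(0)$ satisfies the defining recurrence of $t_p$, and conclude equality. The only difference is that you spell out the concluding strong induction (including the check that $m<n$) which the paper leaves implicit.
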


\begin{proof}
    First observe that $\phi^\omega(0)[0]=0=t_p(0)$, so $\phi^\omega(0)$ and $t_p$ start with the same symbol. We will show that  $\phi^\omega(0)$ satisfies the same recurrence relation as $t_p$. Let $m,r\in\mb{N}$ with $0\leq r<p$. Since $\phi$ is a $p$-uniform morphism prolongable on 0, we can apply Lemma \ref{lem:morphism_is_recurrence} to calculate $\phi^\omega(0)[mp+r]$, giving
    $$\phi^\omega(0)[mp+r]=\phi(\phi^\omega(0)[m])[r]=\phi^\omega(0)[m]+\overline{r}$$
    The second equality is by the definition of $\phi$ and the fact that $0\leq r<p$. Thus, we have shown that the sequence $\phi^\omega(0)$ follows the same recurrence relation as $t_p$. Since we also have $\phi^\omega(0)[0]=t_p(0)$, we must have $\phi^\omega(0)=t_p$.
\end{proof}

We will now demonstrate a fact about Thue-Morse sequences which will be important in the proofs of the coming theorems.

\begin{lemma}\label{lem:p^k_reduction}
    Let $t_p$ be the generalized Thue-Morse sequence over $p$ symbols. Then for all $m,k,r\in\mb{N}$ with $k\neq0$ and $0\leq r<p^k$, $$t_p(mp^k+r)=t_p(m)+t_p(r).$$
\end{lemma}

\begin{proof}
    We will induct on $k$. In the base case $k=1$, we can just apply the definition of $t_p$ to find that $t_p(mp+r)=t_p(m)+\overline{r}$. Since $0\leq r<p$, we have $t_p(r)=\overline{r}$, so $t_p(mp+r)=t_p(m)+t_p(r)$, settling the base case.

Now assume that for some arbitrary $k\in\mb{N}$, $t_p(mp^k+r)\equiv t_p(m)+t_p(r)$ holds for all integers $m$ and $0\leq r<p^k$. Let $m_1,r_1\in\mb{N}$ with $0\leq r_1<p^{k+1}$. We then write $r_1=m_2p^k+r_2$, with $0\leq m_2<p$ and $0\leq r_2<p^k$. We can now compute $t_p(m_1p^{k+1}+r_1)$.
\begin{align*}
t_p(m_1p^{k+1}+r_1)&=t_p(m_1p^{k+1}+m_2p^k+r_2)\\
&=t_p((m_1p+m_2)p^k+r_2)\\
&=t_p(m_1p+m_2)+t_p(r_2)\\
&=t_p(m_1)+t_p(m_2)+t_p(r_2)\\
&=t_p(m_1)+t_p(r_1)
\end{align*}

We find the third congruence by applying the induction hypothesis, the fourth by the base case, and last because $t_p(r_1)=t_p(m_2p^k+r_2)=t_p(m_2)+t_p(r_2)$. This completes the induction.
\end{proof}

\subsection{Turtle Curves}

We will now formalize the concept of a turtle curve as described in the introduction. A turtle's state consists of two pieces of information: a position and a heading. Each instruction that the turtle follows also consists of two pieces of information: a translation and a rotation. These two concepts really contain the same type of information, so we will define a structure which can represent both turtle states and turtle instructions.

\begin{definition}
    Let $\mc{G}=\mb{C}\times\mb{S}^1$, where $\mb{S}^1=\{z\in\mb{C}:|z|=1\}$. Define $+:\mc{G}\times \mc{G}\to \mc{G}$ by letting
    $$(z_1,u_1)+(z_2,u_2)=(z_1+u_1z_2, u_1u_2)$$
    for any $(z_1,u_1),(z_2,u_2)\in \mc{G}$. We call $(\mc{G},+)$ the \textit{turtle group}.
\end{definition}

An element $(z,u)\in\mc{G}$ can be interpreted in two different ways. The first is to think of $(z,u)$ as the state of a turtle; $z$ is the position of the turtle, and $u$ is a complex number on the unit circle which represents the heading of the turtle. The second way is to think of $(z,u)$ as an instruction given to the turtle: ``first translate by $z$, and then adjust your heading by multiplying it by $u$.'' The sum of two elements $g_1,g_2\in\mc{G}$ can be thought of as the resulting turtle state when a turtle starts with the state $g_1$ and performs the instruction $g_2$. 

Note that if a turtle starts in the state $(0,1)$ and performs an instruction $(z,u)\in\mc{G}$, the resulting state will be $(0,1)+(z,u)=(0+1\cdot z,1\cdot u)=(z,u)$. Because of this, the two interpretations of $(z,u)$ as a state and as an instruction are compatible, and $(0,1)$ can be thought of as an identity element of $\mc{G}$. In fact, as hinted by the name, the turtle group $\mc{G}$ is indeed a group\footnote{This is the motivation for having the turtle rotate \textit{after} translating. If the operation on $\mc{G}$ was defined so that the turtle rotated before moving, then $\mc{G}$ would no longer be a group.}.

\begin{proposition}
    $(\mc{G},+)$ is a group.
\end{proposition}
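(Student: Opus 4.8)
The plan is to verify the four group axioms directly from the explicit formula for the operation, since $\mc{G}$ is defined concretely as $\mb{C}\times\mb{S}^1$. First I would dispatch closure: given $(z_1,u_1),(z_2,u_2)\in\mc{G}$, the first coordinate $z_1+u_1z_2$ is automatically a complex number, and for the second coordinate we have $|u_1u_2|=|u_1||u_2|=1$, so $u_1u_2\in\mb{S}^1$ and the sum indeed lands back in $\mc{G}$. This is where the choice of $\mb{S}^1$ (closed under multiplication) rather than all of $\mb{C}^*$ matters.

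The substance of the proof is associativity, which I would establish by expanding both bracketings for general elements $g_i=(z_i,u_i)$ and checking they coincide. On one side,
\begin{align*}
    (g_1+g_2)+g_3 &= (z_1+u_1z_2,\,u_1u_2)+(z_3,u_3)\\
    &=(z_1+u_1z_2+u_1u_2z_3,\,u_1u_2u_3),
\end{align*}
while on the other,
\begin{align*}
    g_1+(g_2+g_3) &= (z_1,u_1)+(z_2+u_2z_3,\,u_2u_3)\\
    &=(z_1+u_1(z_2+u_2z_3),\,u_1u_2u_3).
\end{align*}
The two agree once one invokes distributivity $u_1(z_2+u_2z_3)=u_1z_2+u_1u_2z_3$ in the first coordinate and associativity of multiplication in $\mb{C}$ in the second. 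The only thing to watch is tracking the cross term $u_1u_2z_3$ correctly.

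For the identity, the discussion preceding the statement already singles out $(0,1)$, and I would confirm it is two-sided via $(0,1)+(z,u)=(z,u)$ and $(z,u)+(0,1)=(z,u)$. Finally, for inverses I would exhibit, for each $(z,u)\in\mc{G}$, the candidate $(-\overline{u}z,\,\overline{u})$, using that $u^{-1}=\overline{u}$ whenever $u\in\mb{S}^1$; a short check gives $(z,u)+(-\overline{u}z,\overline{u})=(0,1)$ and likewise on the left. All of these computations are routine, so I do not expect a genuine obstacle; associativity is the only step demanding any care.
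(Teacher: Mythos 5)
Your proof is correct and follows essentially the same route as the paper: direct verification of the axioms, with the identity $(0,1)$, associativity by expanding both bracketings, and the same inverse element (your $(-\overline{u}z,\overline{u})$ equals the paper's $(-\tfrac{z}{u},\tfrac{1}{u})$ since $\overline{u}=u^{-1}$ on $\mb{S}^1$). The only addition is your explicit closure check, which the paper leaves implicit.
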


\begin{proof}
    As mentioned above, it is not difficult to check that $(0,1)$ is an identity element of $\mc{G}$. For any element $(z,u)\in\mc{G}$, its inverse is $(-\frac{z}{u},\frac{1}{u})\in\mc{G}$. To check associativity, let $(z_1,u_1),(z_2,u_2),(z_3,u_3)\in\mc{G}$. Then
    \begin{align*}
        ((z_1,u_1)+(z_2,u_2))+(z_3,u_3)&=(z_1+u_1z_2, u_1u_2)+(z_3,u_3)\\
        &=(z_1+u_1z_2+u_1u_2z_3,u_1u_2u_3)\\
        &=(z_1,u_1)+(z_2+u_2z_3,u_2u_3)\\
        &=(z_1,u_1)+((z_2,u_2)+(z_3,u_3)).
    \end{align*}
    Therefore $(\mc{G},+)$ is a group.
\end{proof}

Instead of directly producing a sequence of instructions in $\mc{G}$ for our turtle, we find it helpful to obtain our sequence of instructions from a sequence over some finite alphabet $A$. Thus, we define a function which can take a sequence in $A$ and map it to a sequence in $\mc{G}$.

\begin{definition}
    Given a finite alphabet $A$, an \textit{interpreter function} on $A$ is any function $\tau:A\to \mc{G}$. We extend this $\tau$ to functions $\tau:A^*\to\mc{G}^*$ and $\tau:A^\mb{N}\to\mc{G}^\mb{N}$ in the natural way.
\end{definition}

It will also be helpful to define some basic projection functions from $\mc{G}$ to $\mb{C}$ and $\mb{S}^1$.

\begin{definition}
    Define $\pi_1:\mc{G}\to\mb{C}$ by letting $\pi_1((z,u))=z$ for any $(z,u)\in \mc{G}$. Define $\pi_2:\mc{G}\to\mb{S}^1$ by letting $\pi_1((z,u))=u$ for any $(z,u)\in \mc{G}$.
\end{definition}

Now that we have defined how turtle instructions work and set up a way to produce them from a sequence of symbols, we are ready to formally define our concept of a turtle curve.

\begin{definition}
    Let $\sigma$ be a sequence over a finite alphabet $A$, and let $\tau$ be an interpreter function on $A$. We then call the tuple $T=(\sigma,\tau)$ a \textit{turtle curve}. We abuse notation by also viewing $T$ as a sequence over $\mb{C}$, letting
    $$T(n)=\pi_1\left(\sum_{i=0}^{n-1}\tau(\sigma(i))\right)$$
    for all $n\in\mb{N}$.
\end{definition}

In other words, $T(n)$ is the position of the turtle after executing the instructions represented by the first $n$ terms of the sequence $\sigma$. We now define several other functions to more easily access relevant information pertaining to a turtle curve.

\begin{definition}\label{def:SPalpha}
    Define $S:\mc{G}^*\to\mc{G}$ by letting 
    $$S(g_1g_2...g_n)=\sum_{i=1}^ng_i$$
    for any $g_1g_2...g_n\in \mc{G}^*$. Define $P=\pi_1\circ S$ and $\alpha=\pi_2\circ S$. In the context where we have a turtle curve $T$ with an interpreter function $\tau$, we let $S_T=S\circ\tau$, $P_T=P\circ\tau$, and $\alpha_T=\alpha\circ\tau$.
\end{definition}

The function $P$ can be thought of as a ``position function,'' taking in a word of turtle moves and returning position obtained after executing those moves. The function $\alpha$ can be thought of as a ``heading function,'' taking in a word of turtle moves and returning the heading obtained after executing those moves. The functions $P_T$ and $\alpha_T$ are similar except that they take in symbols in the alphabet of a turtle curve and automatically interpret them by applying $\tau$. While the value of $P_T$ can be difficult to calculate, the following proposition makes calculating $\alpha_T$ fairly straightforward.

\begin{proposition}\label{prop:alpha_is_morphism}
    The map $\alpha$ is a homomorphism from $(G^*,||)$ to $(\mb{S}^1,\cdot)$.
\end{proposition}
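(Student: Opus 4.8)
The plan is to factor $\alpha$ as a composition of two maps, each of which is manifestly a homomorphism, so that the conclusion follows from the fact that a composition of homomorphisms is again a homomorphism. This avoids any direct manipulation of $\alpha$ on concatenated words.

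First I would observe that the projection $\pi_2$ is itself a homomorphism from $(\mc{G},+)$ to $(\mb{S}^1,\cdot)$. This is immediate from the definition of the operation on $\mc{G}$: for any $(z_1,u_1),(z_2,u_2)\in\mc{G}$ we have $\pi_2((z_1,u_1)+(z_2,u_2))=\pi_2((z_1+u_1z_2,\,u_1u_2))=u_1u_2=\pi_2((z_1,u_1))\cdot\pi_2((z_2,u_2))$. In other words, the heading coordinate of a sum in $\mc{G}$ depends only on the heading coordinates of the summands, and simply multiplies them.

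Next I would show that $S$ is a homomorphism from $(\mc{G}^*,\concat)$ to $(\mc{G},+)$; that is, $S(uv)=S(u)+S(v)$ for all words $u,v\in\mc{G}^*$. Writing $u=g_1\cdots g_m$ and $v=h_1\cdots h_n$, the word $uv$ lists the $g_i$ followed by the $h_j$, so $S(uv)=\sum_{i=1}^m g_i+\sum_{j=1}^n h_j=S(u)+S(v)$, where splitting the single $\mc{G}$-sum into two consecutive blocks is justified by the associativity of $+$ established earlier. For the identity I use the convention that an empty sum equals the identity $(0,1)$, so that $S(\epsilon)=(0,1)$ and hence $\alpha(\epsilon)=\pi_2((0,1))=1$, the identity of $\mb{S}^1$; this is what is needed for $\alpha$ to be a monoid homomorphism out of the free monoid $\mc{G}^*$.

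Finally, since $\alpha=\pi_2\circ S$ and both $\pi_2$ and $S$ are homomorphisms, $\alpha$ is a homomorphism: $\alpha(uv)=\pi_2(S(uv))=\pi_2(S(u)+S(v))=\pi_2(S(u))\cdot\pi_2(S(v))=\alpha(u)\cdot\alpha(v)$. I do not expect any genuine obstacle here. All of the content lies in the single observation that the heading coordinate of the turtle-group operation forgets the positional data and reduces to ordinary multiplication in $\mb{S}^1$ — which is exactly what makes $\alpha$ easy to compute, in contrast to the position function $P$, whose value genuinely mixes the two coordinates.
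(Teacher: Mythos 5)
Your proof is correct and follows essentially the same route as the paper's: factor $\alpha=\pi_2\circ S$, verify that $\pi_2$ is a homomorphism by the same direct computation, and conclude via composition of homomorphisms. The only difference is that you spell out why $S$ respects concatenation and handle the empty word explicitly, where the paper simply asserts these; no further changes are needed.
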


\begin{proof}
    First see that the map $S$ is clearly a homomorphism from $(G^*,||)$ to $(\mc{G},+)$ by definition. The projection map $\pi_2$ is a homomorphism because
    \begin{align*}
        \pi_2((z_1,u_1)+(z_2,u_2))&=\pi_2((z_1+u_1z_2, u_1u_2))\\
        &=u_1u_2\\
        &=\pi_2((z_1,u_1))\pi_2((z_2,u_2))
    \end{align*}
    for any $(z_1,u_1),(z_2,u_2)\in\mc{G}$. Thus, $\alpha=\pi_2\circ S$ is a homomorphism because it is a composition of homomorphisms.
\end{proof}

Stated more explicitly, Proposition \ref{prop:alpha_is_morphism} means that for any $g_1g_2...g_n\in\mc{G}^*$, we have
$$\alpha(g_1g_2...g_n)=\prod_{i=1}^n\pi_2(g_i).$$
The following lemma extends this result by giving us a way to calculate $P_T$ as well. This is similar to Lemma 2 in \cite{zantema}.

\begin{lemma}\label{lem:expand_S}
    Let $T$ be a turtle curve using a finite alphabet $A$, and let $w_1,w_2,...,w_n\in A^*$. Then
    $$S_T(w_1w_2...w_n)=\left(\sum_{i=1}^n\alpha_T(w_1w_2...w_{i-1})P_T(w_i),\prod_{i=1}^n\alpha_T(w_i)\right).$$
\end{lemma}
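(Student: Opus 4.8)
The plan is to reduce the whole statement to a single identity about summing $n$ elements in the turtle group, and then establish that identity by a short induction. First I would observe that since the extended interpreter $\tau:A^*\to\mc{G}^*$ preserves concatenation and $S:\mc{G}^*\to\mc{G}$ is a homomorphism from $(\mc{G}^*,||)$ to $(\mc{G},+)$ (as noted in the proof of Proposition \ref{prop:alpha_is_morphism}), the composite $S_T=S\circ\tau$ is itself a homomorphism from $(A^*,||)$ to $(\mc{G},+)$. Consequently $S_T(w_1w_2\cdots w_n)=\sum_{i=1}^n S_T(w_i)$, where the sum is taken in the turtle group. By the definitions of $P_T$ and $\alpha_T$, each summand is the single group element $S_T(w_i)=(P_T(w_i),\alpha_T(w_i))$.

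The heart of the argument is then the following general identity: for any $(z_1,u_1),\ldots,(z_n,u_n)\in\mc{G}$,
$$\sum_{i=1}^n (z_i,u_i)=\left(\sum_{i=1}^n\left(\prod_{j=1}^{i-1}u_j\right)z_i,\ \prod_{i=1}^n u_i\right),$$
with the convention that an empty product equals $1$. I would prove this by induction on $n$. The base case $n=1$ holds because the empty product supplies the coefficient $1$. For the inductive step I would add $(z_{n+1},u_{n+1})$ to the inductive hypothesis using the turtle group operation: the second coordinate multiplies cleanly to $\prod_{i=1}^{n+1}u_i$, while in the first coordinate the heading accumulated so far, namely $\prod_{i=1}^n u_i$, gets applied to $z_{n+1}$, producing exactly the $i=n+1$ term of the extended outer sum. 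This is the one place where care is required, matching the index-shifted coefficient $\prod_{j=1}^{i-1}u_j$ against the definition of $+$ in $\mc{G}$.

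Finally, I would substitute $z_i=P_T(w_i)$ and $u_i=\alpha_T(w_i)$ into the identity. The only remaining step is to recognize the accumulated coefficient $\prod_{j=1}^{i-1}\alpha_T(w_j)$ as $\alpha_T(w_1w_2\cdots w_{i-1})$, which holds because $\alpha_T=\alpha\circ\tau$ is a homomorphism by Proposition \ref{prop:alpha_is_morphism} together with the concatenation-preserving property of $\tau$. This reproduces the claimed formula verbatim. I expect no serious obstacle: the lemma is essentially the explicit unwinding of the semidirect-product structure of $\mc{G}$, and the only delicate points are the off-by-one bookkeeping in the accumulated-heading products and the empty-product base case.
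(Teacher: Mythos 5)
Your proof is correct and follows essentially the same route as the paper: an induction on $n$ whose inductive step appends one more block and unwinds the turtle-group operation, with $\alpha_T$ being a homomorphism supplying the accumulated-heading coefficients. The only difference is cosmetic --- you isolate the computation as a general identity for sums of elements $(z_i,u_i)\in\mc{G}$ and then substitute $z_i=P_T(w_i)$, $u_i=\alpha_T(w_i)$, whereas the paper runs the same induction directly on $S_T(w_1w_2\cdots w_n)$.
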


\begin{proof}
    We induct on $n$. As a base case, when $n=1$, we have
    $$S_T(w_1)=(P_T(w_1),\alpha_T(w_1))=(\alpha_T(\epsilon)P_T(w_1),\alpha_T(w_1)).$$
    Now assume that the statement is true for some $n\in\mb{N}$. Let $w_1,w_2,...,w_{n+1}\in A^*$. Then using the fact that $S_T$ and $\alpha_T$ are homomorphisms, we find
    \begin{align*}
        &S_T(w_1,w_2,...,w_{n+1})\\
        =&S_T(w_1,w_2,...,w_n)+S_T(w_{n+1})\\
        =&\left(\sum_{i=1}^n\alpha_T(w_1w_2...w_{i-1})P_T(w_i),\prod_{i=1}^n\alpha_T(w_i)\right)+(P_T(w_{n+1}),\alpha_T(w_{n+1}))\\
        =&\left(\sum_{i=1}^n\alpha_T(w_1w_2...w_{i-1})P_T(w_i)+P_T(w_{n+1})\prod_{i=1}^n\alpha_T(w_i),\alpha_T(w_{n+1})\prod_{i=1}^n\alpha_T(w_i)\right)\\
        =&\left(\sum_{i=1}^{n+1}\alpha_T(w_1w_2...w_{i-1})P_T(w_i),\prod_{i=1}^{n+1}\alpha_T(w_i)\right).
    \end{align*}
    This completes the induction step.
    
\end{proof}

While we have currently defined a turtle curve $T$ as a sequence of points in the complex plane, if we would like something closer to an actual curve, we can construct a union of line segments between each consecutive pair of points of $T$.

\begin{definition}
    For any $x,y\in\mb{C}$, define $\ell(x,y)=\{x+t(y-x):t\in[0,1]\}$.
    Then for any $T\in\mb{C}^\mb{N}$, let
    $$\mc{P}(T)=\bigcup_{i=0}^\infty\ell(T(i),T(i+1)).$$
    Similarly, for any word $w\in\mb{C}^*$ of length $n$, let
    $$\mc{P}(w)=\bigcup_{i=0}^{n-2}\ell(w[i],w[i+1]).$$
\end{definition}

Our later results on convergence will involve scaled copies of sets of the form $\mc{P}(T[:n])$.

\subsection{Thue-Morse Turtle Curves}

We now define our main object of interest, which is simply a turtle curve generated by a Thue-Morse sequence.

\begin{definition}
    A \textit{Thue-Morse turtle curve} is any turtle curve $T=(\sigma,\tau)$ with $\sigma=t_p$ for some generalized Thue-Morse sequence $t_p$.
\end{definition}

For example, the turtle curve considered by Ma and Holdener in the introduction of \cite{ma_holdener} is $T=(t_2,\tau)$, with $\tau(0)=(1,1)$ and $\tau(1)=(0,\zeta_6)$, where $\zeta_6=e^{\frac{2\pi i}{6}}$. In this turtle program, the turtle moves forward one unit whenever reading a 0 in its instructions, and turns left by $\frac{2\pi}{6}$ whenever reading a 1. Zantema showed an exact relationship between the Koch curve and the turtle curve $T=(t_2,\tau)$, with $\tau(0)=(1,\zeta_3)$ and $\tau(1)=(1,-1)$ \cite{zantema}. Both curves are shown in Figure \ref{fig:holdener_zantema_curves}.

\begin{figure}[H]
    \centering
    \includegraphics[scale=0.4]{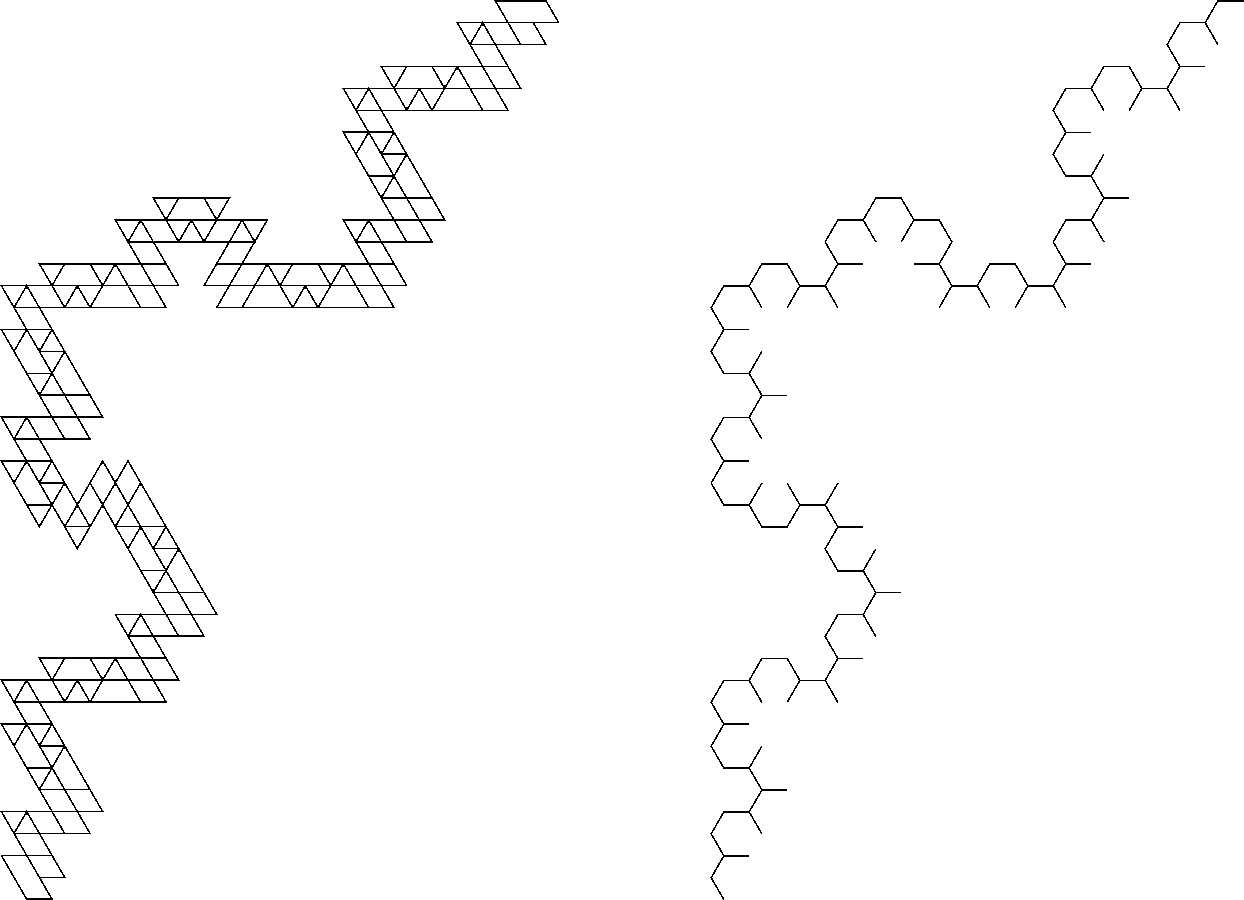}
    \caption{The curves considered by Ma and Holdener (left) and Zantema (right).}
    \label{fig:holdener_zantema_curves}
\end{figure}

\subsection{Absolute Turtle Curves}

One difficulty when analyzing turtle curves is the fact that the turtle group $\mc{G}$ is not commutative. When proving our results, we will work around this by relating our turtle curve of interest to a turtle curve in which the turtle's heading never changes. Such turtle curves are essentially just a sequence of partial sums of complex numbers, as demonstrated by the following proposition.

\begin{proposition}\label{prop:A_cong_C}
    The set $\mc{A}=\{g\in\mc{G}:\pi_2(g)=1\}$ is a subgroup of $\mc{G}$, and $\mc{A}\cong\mb{C}$.
\end{proposition}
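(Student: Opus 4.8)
The plan is to characterize $\mc{A}$ concretely and then exploit that $\pi_2$ has already been shown to be a homomorphism. First I would observe that $g \in \mc{A}$ exactly when $g = (z,1)$ for some $z \in \mb{C}$, since the condition $\pi_2(g) = 1$ forces the second coordinate to equal $1$. This gives a clean description of the underlying set before any group-theoretic work.

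For the subgroup claim, rather than verifying the subgroup axioms directly, I would note that in the proof of Proposition \ref{prop:alpha_is_morphism} the projection $\pi_2 : (\mc{G},+) \to (\mb{S}^1,\cdot)$ was already shown to be a group homomorphism. Since $1$ is the identity of $(\mb{S}^1,\cdot)$, we have $\mc{A} = \ker \pi_2$, and the kernel of a homomorphism is always a subgroup (indeed a normal one). If a self-contained argument is preferred, one can instead check directly that $(0,1) \in \mc{A}$, that $(z_1,1) + (z_2,1) = (z_1+z_2,1) \in \mc{A}$, and that the inverse $(-z,1)$ of $(z,1)$ lies in $\mc{A}$; both routes are short.

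For the isomorphism I would take the restriction $\pi_1|_{\mc{A}} : \mc{A} \to (\mb{C},+)$ sending $(z,1) \mapsto z$ as the candidate map, with $\mb{C}$ regarded as a group under addition. The key computation is that the turtle-group operation on $\mc{A}$ collapses to ordinary addition: $(z_1,1)+(z_2,1) = (z_1+z_2,1)$, because the rotation factor $u_1 = 1$ acts trivially on $z_2$. This makes $\pi_1|_{\mc{A}}$ a homomorphism, and it is visibly a bijection — injective since $z=0$ forces $g=(0,1)$, and surjective since every $z \in \mb{C}$ is the image of $(z,1)$. Hence $\pi_1|_{\mc{A}}$ is a group isomorphism, establishing $\mc{A} \cong \mb{C}$.

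There is no serious obstacle here; the only point worth stating explicitly is that the noncommutativity of $\mc{G}$ — the very difficulty motivating this subsection — disappears on $\mc{A}$ precisely because the heading is pinned at $1$, so the twisted addition of the turtle group degenerates to the commutative addition of $\mb{C}$. That observation is what justifies using $\mc{A}$ as the convenient commutative ambient group in the analysis that follows.
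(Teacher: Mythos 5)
Your proof is correct and takes essentially the same route as the paper, which simply asserts that ``it is straightforward to check that the map $\pi_1:\mc{A}\to\mb{C}$ is an isomorphism''; you supply exactly the details being elided, namely that the twisted operation $(z_1,1)+(z_2,1)=(z_1+z_2,1)$ collapses to ordinary addition so that $\pi_1|_{\mc{A}}$ is a bijective homomorphism. Your additional observation that $\mc{A}=\ker\pi_2$, with $\pi_2$ already shown to be a homomorphism in Proposition \ref{prop:alpha_is_morphism}, is a clean way to dispatch the subgroup claim and is fully consistent with the paper's development.
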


\begin{proof}
    It is straightforward to check that the map $\pi_1:\mc{A}\to\mb{C}$ is an isomorphism.
\end{proof}

We now introduce language for turtle curves that live within $\mc{A}$.

\begin{definition}
    An \textit{absolute turtle curve} $T=(\sigma,\tau)$ is a turtle curve where $\tau(a)\in\mc{A}$ for all $a\in A$. In this context, because $\mc{A}\cong\mb{C}$, we often view $\tau$ as a function $\tau:A\to\mb{C}$.
\end{definition}

It is also worth noting the following fact.

\begin{proposition}
    If $T$ is an absolute turtle curve, then $P_T$ is a homomorphism from $(A,||)$ to $(\mb{C},+)$.
\end{proposition}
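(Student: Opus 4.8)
The plan is to reduce the statement to the identity $P_T = \pi_1 \circ S_T$ and to show that both $S_T$ and the restriction of $\pi_1$ to $\mc{A}$ are homomorphisms, so that their composition is one as well. I read the group $(A,||)$ in the statement as the free monoid $(A^*,||)$ under concatenation, matching the usage in Proposition \ref{prop:alpha_is_morphism}; the goal is then to show that $P_T(uv)=P_T(u)+P_T(v)$ for all $u,v\in A^*$.

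First I would observe that $S_T=S\circ\tau$ is a homomorphism from $(A^*,||)$ to $(\mc{G},+)$. Indeed, $\tau:A^*\to\mc{G}^*$ preserves concatenation by construction, and $S$ is a homomorphism from $(\mc{G}^*,||)$ to $(\mc{G},+)$ directly from its definition as a summation (this was already used in the proof of Proposition \ref{prop:alpha_is_morphism}). Hence $S_T(uv)=S_T(u)+S_T(v)$.

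Next I would invoke that $T$ is \emph{absolute}: each $\tau(a)\in\mc{A}$, and since $\mc{A}$ is a subgroup of $\mc{G}$ by Proposition \ref{prop:A_cong_C}, every value $S_T(w)$ lies in $\mc{A}$ (equivalently, $\alpha_T(w)=1$ for every word $w$). The crucial point, and the only place where the non-commutativity of $\mc{G}$ could interfere, is that $\pi_1$ is \emph{not} additive on all of $\mc{G}$: from the group law, $\pi_1((z_1,u_1)+(z_2,u_2))=z_1+u_1z_2$, which depends on the heading $u_1$. On $\mc{A}$, however, all headings equal $1$, so $\pi_1((z_1,1)+(z_2,1))=z_1+z_2$, and thus $\pi_1$ restricted to $\mc{A}$ is a homomorphism onto $(\mb{C},+)$---in fact it is the isomorphism of Proposition \ref{prop:A_cong_C}.

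Finally I would combine the two facts: since $S_T$ takes values in $\mc{A}$ and $\pi_1|_{\mc{A}}$ is additive, $P_T=\pi_1\circ S_T$ yields $P_T(uv)=\pi_1(S_T(u)+S_T(v))=\pi_1(S_T(u))+\pi_1(S_T(v))=P_T(u)+P_T(v)$, as required. I expect no genuine obstacle in this argument; the single point that must be stated with care is that it relies essentially on the headings being trivial, which is precisely what lets $\pi_1$ act additively. This is exactly the structural feature that makes absolute turtle curves tractable, and is the reason for isolating them.
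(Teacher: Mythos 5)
Your proposal is correct and follows the same decomposition as the paper's one-line proof, namely $P_T=\pi_1\circ S\circ\tau$ with each factor a homomorphism. In fact you are more careful than the paper: its proof asserts flatly that $\pi_1$ is a homomorphism, which is false on all of $\mc{G}$ since $\pi_1((z_1,u_1)+(z_2,u_2))=z_1+u_1z_2$; your observation that absoluteness forces $S_T$ to take values in $\mc{A}$, where $\pi_1$ restricts to the isomorphism of Proposition \ref{prop:A_cong_C}, is exactly the detail needed to make the argument airtight, and it is the only place the hypothesis that $T$ is absolute is used.
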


\begin{proof}
    Because $\pi_1$, $S$, and $\tau$ are all homomorphisms, $P_T=\pi_1\circ S\circ\tau$ is a homomorphism.
\end{proof}

\section{Connection to Dekking's Work}

In 2007, Allouche and Skordev \cite{revisited} pointed out that the paper of Ma and Holdener \cite{ma_holdener} was actually not the first time that a connection between the Thue-Morse sequence and the Koch curve had appeared. A 1982 paper by Dekking \cite{dekking} considered sums of the form 

$$Z(N,p,q)=\sum_{n=0}^{N-1}\zeta_p^{t_p(n)}\zeta_q^n,$$

where $p$ and $q$ are integers with $p,q\geq2$, $\zeta_p=e^{\frac{2\pi i}{p}}$, $\zeta_q=e^{\frac{2\pi i}{q}}$, and $t_p(n)$ is a generalized Thue-Morse sequence\footnote{Dekking actually used the function $s_p(n)$, the sum of the digits of $n$ when written in base $p$. However, as mentioned in \cite{ubiquitous}, $s_p(n)\equiv t_p(n)\pmod{p}$, which implies that $\zeta_p^{s_p(n)}=\zeta_p^{t_p(n)}$ for all $n\in\mb{N}$}. In the case where $p=2$ and $q=3$, drawing a line between $Z(N,2,3)$ and $Z(N+1,2,3)$ for every $N$ produces the classic approximation of the Koch curve using line segments shown in Figure \ref{fig:classic_koch}. To get an idea as to how this is relevant to turtle curves, compare the sums $Z(N,2,3)$ to the Thue-Morse turtle curve $T=(t_2,\tau)$ with $\tau(0)=(1,\zeta_3)$ and $\tau(1)=(-1,\zeta_3)$. $Z(N,2,3)$ gives the sums

$$Z(N,2,3)=\sum_{n=0}^{N-1}(-1)^{t_p(n)}\zeta_3^n.$$

Observe that in the Thue-Morse curve $T$, the turtle's heading will always rotate by an angle of $\frac{2\pi}{3}$ no matter if the next symbol is 0 or 1. This produces a period 3 sequence of constantly rotated headings over each step, which corresponds to the $\zeta_3^n$ factor in $Z(N,2,3)$. The $(-1)^{t_p(k)}$ factor in $Z(N,2,3)$ reflects how $P(0)=1=(-1)^0$ and $P(1)=-1=(-1)^1$. So $Z(N,2,3)$ and the $T$ are essentially the same, meaning that $\mc{P}(T)$ is also the classic approximation of the Koch curve using line segments shown in Figure \ref{fig:classic_koch}.

\begin{figure}[H]
    \centering
    \includegraphics[scale=0.7]{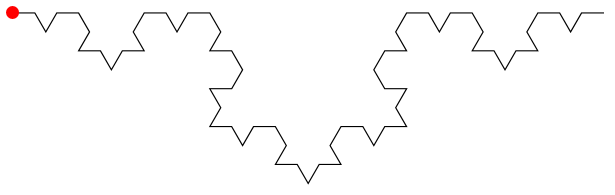}
    \caption{Both the turtle curve $T$ defined above and the sums $Z(N,2,3)$ produce the classic approximation of the Koch curve using line segments. The origin is marked with a red dot.}
    \label{fig:classic_koch}
\end{figure}

This is of course a very specific example, but our main results will show how a broader class of Thue-Morse turtle curves can be related to $Z(N,p,q)$.

\subsection{Dekking Sequences}

There is another, more direct way of interpreting $Z(N,p,q)$ as a turtle curve, which is to think of it as an absolute turtle curve. Let us imagine the sequence that such a turtle curve might use. Again, it is helpful to use $p=2$ and $q=3$ as a familiar example. Note that each term of the sum $Z(N,2,3)$ is one of six possible complex numbers, being the 6th roots of unity. This means that we can define a sequence over six symbols which encodes each possible term of the sum as a symbol. The determining factor of the $n$th term is the sum is the value of $t_2(n)$ and $n\mod3$, so those values will describe our symbols. This leads to the following definition for a general $p$ and $q$.

\begin{definition} 
    For any integers $p,q\geq2$, define $z_{p,q}:\mb{N}\to\mb{Z}/p\mb{Z}\times\mb{Z}/q\mb{Z}$ by $z_{p,q}(n)=(t_p(n),\overline{n}))$, where $\overline{n}$ is the congruence class of $n$ modulo $q$.  We call $z_{p,q}$ a \textit{Dekking sequence}.
\end{definition}

Here are some examples of Dekking sequences.
\begin{align*}
    (z_{2,3})&=(0,0),(1,1),(1,2),(0,0),(1,1),(0,2),(0,0),(1,1),(1,2),(0,0),(0,1),(1,2)...\\
    (z_{2,5})&=(0,0),(1,1),(1,2),(0,3),(1,4),(0,0),(0,1),(1,2),(1,3),(0,4),(0,0),(1,1)...\\
    (z_{3,2})&=(0,0),(1,1),(2,0),(1,1),(2,0),(0,1),(2,0),(0,1),(1,0),(1,1),(2,0),(0,1)...
\end{align*}

So we are essentially just taking a Thue-Morse sequence and putting it side by side with the simple periodic sequence $f_q(n)=n\mod q$. By thinking of these two sequences as one, we have a sequence which captures the interplay between them. 

The important fact here is that when $\gcd(p,q)=1$, the Dekking sequence $z_{p,q}$ can be defined as the fixed point of a morphism. To show how, we will use a trick: for some $k$, we will find a $k$-uniform morphism which generates $t_p$ and a $k$-uniform morphism which generates $f_q$. Since these morphisms have the same length, we can then combine them to form a $k$-uniform morphism which generates $z_{p,q}$.

In the following propositions, let $p,q$ be integers with $p,q\geq2$ and $\gcd(p,q)=1$.  Let $Q=p^{\varphi(q)}$, where $\varphi$ is the Euler totient function. We start by expressing the periodic sequence $f_q$ as the fixed point of a $Q$-uniform morphism.

\begin{proposition}\label{prop:delta_is_uniform}
     Treat $A_q=\mb{Z}/q\mb{Z}$ as an alphabet. Define the morphism $\delta$ on $A_q^*$ by $\delta(a)=\concat_{j=0}^{Q-1}(a+f_q(j))$ for all $a\in A_q$. Then $\delta$ is a $Q$-uniform morphism with $\delta^\omega(0)=f_q$.
\end{proposition}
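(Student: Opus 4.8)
The plan is to verify the two assertions in turn: that $\delta$ is $Q$-uniform, and that its fixed point starting at $0$ is exactly $f_q$. The first is immediate from the definition, since $\delta(a)=\concat_{j=0}^{Q-1}(a+f_q(j))$ is a concatenation of exactly $Q$ symbols for every $a\in A_q$, so $|\delta(a)|=Q$. To even speak of $\delta^\omega(0)$ I first note that $\delta$ is prolongable on $0$: the leading symbol of $\delta(0)$ is $0+f_q(0)=0$, so $\delta(0)=0w$ for some word $w$, and the unique fixed point $\delta^\omega(0)$ exists.

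Writing $g=\delta^\omega(0)$ and viewing it as a sequence into $\mb{Z}/q\mb{Z}$, the strategy mirrors the proof of Proposition \ref{prop:t_p_equivalent_defs}: I will show $g$ obeys a recurrence that forces $g=f_q$. Since $\delta$ is $Q$-uniform and prolongable on $0$, Lemma \ref{lem:morphism_is_recurrence} applies, giving for all $m$ and $0\leq r<Q$ that $g(mQ+r)=\delta(g(m))[r]=g(m)+f_q(r)$, where the last equality is just the definition of $\delta$ together with $0\leq r<Q$.

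The key arithmetic input, and the only place the hypothesis $\gcd(p,q)=1$ is used, is Euler's theorem: because $\gcd(p,q)=1$, we have $Q=p^{\varphi(q)}\equiv1\pmod{q}$, i.e. $\overline{Q}=\overline{1}$ in $\mb{Z}/q\mb{Z}$. Consequently $f_q(mQ+r)=\overline{mQ+r}=\overline{m}\,\overline{Q}+\overline{r}=\overline{m}+\overline{r}=f_q(m)+f_q(r)$, so $f_q$ satisfies the very same recurrence as $g$ (this is the analogue for $f_q$ of Lemma \ref{lem:p^k_reduction}).

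To finish I will run a strong induction on $n$ proving $g(n)=f_q(n)$. The base case $g(0)=0=f_q(0)$ is the leading symbol, and for $0\leq n<Q$ the recurrence with $m=0$ gives $g(n)=g(0)+f_q(n)=f_q(n)$ directly. For $n\geq Q$, write $n=mQ+r$ with $m\geq1$ and $0\leq r<Q$; then $m<n$, so the inductive hypothesis gives $g(m)=f_q(m)$, and combining the two recurrences yields $g(n)=g(m)+f_q(r)=f_q(m)+f_q(r)=f_q(n)$. The only genuine obstacle is recognizing that $Q=p^{\varphi(q)}$ was chosen precisely so that $\overline{Q}=\overline{1}$; once that congruence is in hand the remaining induction is routine and patterned on the earlier propositions.
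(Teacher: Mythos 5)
Your proof is correct and follows essentially the same route as the paper's: both rest on Lemma \ref{lem:morphism_is_recurrence} applied to the decomposition $n=mQ+r$ and on the congruence $Q=p^{\varphi(q)}\equiv1\pmod{q}$ from Euler's theorem. The only difference is organizational --- you run a strong induction on $n$ directly, while the paper inducts on $k$ over the blocks $n<Q^k$ --- and your explicit checks of $Q$-uniformity and prolongability on $0$ are welcome details the paper leaves implicit.
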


\begin{proof}
    We will induct on $k$ in the following statement: $\delta^\omega(0)[n]=f_q(n)$ for all $0\leq n<Q^k$. This is true in the base case of $k=0$, as $\delta^\omega(0)[0]=0=f_q(0)$.
    
    Now assume that the statement holds for some arbitrary $k\in\mb{N}$. Let $n\in\mb{N}$ with $n<Q^{k+1}$. Then we can write $n=mQ+r$ for some $m,r\in\mb{N}$ with $m<Q^k$ and $r<Q$. $\delta$ is a $Q$-uniform morphism prolongable on 0, so we can use Lemma \ref{lem:morphism_is_recurrence} to calculate
    \begin{align}
        \delta^\omega(0)[n]&=\delta^\omega(0)[mQ+r]\\
        &=\delta(\delta^\omega(0)[m])[r]\\
        &=\delta(f_q(m))[r]\\
        &=f_q(m)+f_q(r)\\
        &=f_q(Qm)+f_q(r)\\
        &=f_q(Qm+r)\\
        &=f_q(n)
    \end{align}

    The third equality is by our induction hypothesis, and the fourth is by the definition of $\delta$. The fifth is because $\gcd(p,q)=1$, so $Q=p^{\varphi(q)}\equiv_q1$. This completes the induction, so for any $k,n\in\mb{N}$ with $n<Q^k$, $\delta^\omega(0)[n]=f_q(n)$. Thus, $\delta^\omega(0)=f_q$.
    
\end{proof}

Now we also express $t_q$ as the fixed point of a $Q$-uniform morphism.

\begin{proposition}\label{prop:phi_is_uniform}
    Treat $A_p=\mb{Z}/p\mb{Z}$ as an alphabet. Define the morphism $\mu$ on $A_p^*$ by $\mu(a)=\phi^{\varphi(q)}(a)$ for all $a\in A_p$, where $\phi$ is the morphism defined by $\phi(a)=\concat_{j=0}^{p-1}(a+f_p(j))$. Then $\mu$ is a $Q$-uniform morphism with $\mu^\omega(0)=t_p$.
\end{proposition}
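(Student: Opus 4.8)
The plan is to avoid any direct computation with $t_p$ and instead exploit the fact that $\mu$ is merely a power of the morphism $\phi$ whose fixed point is already identified in Proposition \ref{prop:t_p_equivalent_defs}. There are really two things to check: that $\mu$ is a $Q$-uniform morphism, and that its fixed point $\mu^\omega(0)$ equals $t_p$. That $\mu$ is a morphism is immediate, since an iterate of a morphism is again a morphism: $\mu(uv) = \phi^{\varphi(q)}(uv) = \phi^{\varphi(q)}(u)\phi^{\varphi(q)}(v) = \mu(u)\mu(v)$.

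For $Q$-uniformity, I would first record the elementary fact that composing a $k_1$-uniform morphism with a $k_2$-uniform morphism yields a $(k_1k_2)$-uniform morphism, since applying the outer morphism to a word of length $k_2$ produces a word of length $k_1k_2$. As each $\phi(a) = \concat_{j=0}^{p-1}(a + f_p(j))$ has length $p$, the morphism $\phi$ is $p$-uniform, and a short induction gives that $\phi^n$ is $p^n$-uniform for all $n$. Taking $n = \varphi(q)$ shows $\mu = \phi^{\varphi(q)}$ is $p^{\varphi(q)} = Q$-uniform. This part is routine bookkeeping.

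For the fixed-point identity I would appeal to uniqueness of the fixed point of a prolongable morphism. First check that $\mu$ is prolongable on $0$: since $\phi(0)$ begins with $0$, an induction shows every $\phi^n(0)$ begins with $0$, so $\mu(0) = \phi^{\varphi(q)}(0)$ begins with $0$ and $\mu^\omega(0)$ is well-defined as the unique fixed point of $\mu$ starting with $0$. Next, observe that $\phi^\omega(0)$ is itself fixed by $\mu$: by definition $\phi^\omega(0)$ is fixed by $\phi$, so applying $\phi$ a total of $\varphi(q)$ times still returns it, giving $\mu(\phi^\omega(0)) = \phi^\omega(0)$. Since $\phi^\omega(0)$ begins with $0$ and is fixed by $\mu$, uniqueness forces $\mu^\omega(0) = \phi^\omega(0)$, and the latter equals $t_p$ by Proposition \ref{prop:t_p_equivalent_defs}.

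The one step I would treat with mild care is the passage from ``$\phi^\omega(0)$ is fixed by $\phi$'' to ``$\phi^\omega(0)$ is fixed by $\phi^{\varphi(q)}$,'' but this is immediate once one reads the identity $\phi^\omega(0) = t_p$ as asserting precisely that $\phi^\omega(0)$ is the fixed point of $\phi$. I expect no genuine obstacle: the content is essentially the standard fact that a morphism and each of its iterates share the same fixed point. It is worth noting that, unlike the companion Proposition \ref{prop:delta_is_uniform}, this proposition does not actually use the hypothesis $\gcd(p,q) = 1$; the coprimality and the specific choice of exponent $\varphi(q)$ only become relevant later, when $\mu$ and $\delta$ are matched up to build the morphism generating $z_{p,q}$.
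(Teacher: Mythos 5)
Your argument is correct, and the uniformity half is exactly the paper's: $\phi$ is $p$-uniform, so $|\mu(a)|=|\phi^{\varphi(q)}(a)|=p^{\varphi(q)}=Q$. Where you diverge is in identifying the fixed point. The paper proves $\mu^\omega(0)=\phi^\omega(0)$ by an explicit induction on $k$, showing $\mu^\omega(0)[:Q^k]=\phi^\omega(0)[:Q^k]$ via the prefix identity $\mu(\mu^\omega(0)[:Q^k])=\mu^\omega(0)[:Q^{k+1}]$ and $\phi^{\varphi(q)}(\phi^\omega(0)[:Q^k])=\phi^\omega(0)[:Q^{k+1}]$; you instead check that $\mu$ is prolongable on $0$, observe that $\phi^\omega(0)$ is fixed by $\mu$, and invoke the uniqueness of the fixed point of a prolongable morphism starting with a given letter --- a fact the paper does state in its preliminaries but never proves. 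Your route is shorter and makes the underlying principle (a morphism and its iterates share fixed points) explicit, at the cost of leaning on that unproved uniqueness assertion; the paper's induction is essentially a self-contained proof of the one instance of uniqueness it needs, using only the prefix property already established in the proof of Lemma \ref{lem:morphism_is_recurrence}. Two small points: your parenthetical that the identity $\phi^\omega(0)=t_p$ ``asserts precisely that $\phi^\omega(0)$ is the fixed point of $\phi$'' is slightly miscast --- being a fixed point of $\phi$ is built into the construction of $\phi^\omega(0)$, independent of Proposition \ref{prop:t_p_equivalent_defs} --- and your closing observation that $\gcd(p,q)=1$ is not used here (unlike in Proposition \ref{prop:delta_is_uniform}) is accurate and matches the paper's proof, which likewise never invokes coprimality.
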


\begin{proof}
    The morphism $\phi$ is $p$-uniform, so $|\phi^k(a)|=p^k$ for any $k\in\mb{N}$. This means that $|\mu(a)|=|\phi^{\varphi(q)}(a)|=p^{\varphi(q)}=Q$ for any $a\in A$, so $\mu$ is a $Q$-uniform morphism.

    Now we will show that $\mu^\omega(0)[:Q^k]=\phi^\omega(0)[:Q^k]$ for all $k\in\mb{N}$ by induction on $k$. In the base case where $k=0$, we have $\mu^\omega(0)[:1]=0=\phi^\omega(0)[:1]$. Assume that $\mu^\omega(0)[:Q^k]=\phi^\omega(0)[:Q^k]$ for some arbitrary $k\in\mb{N}$. Then we have
    \begin{align*}
        \mu^\omega(0)[:Q^k]&=\phi^\omega(0)[:Q^k]\\
        \mu(\mu^\omega(0)[:Q^k])&=\mu(\phi^\omega(0)[:Q^k])\\
        \mu^\omega(0)[:Q^{k+1}]&=\phi^{\varphi(q)}(\phi^\omega(0)[:Q^k])\\
        \mu^\omega(0)[:Q^{k+1}]&=\phi^\omega(0)[:Q^k\cdot p^{\varphi(q)}]\\
        \mu^\omega(0)[:Q^{k+1}]&=\phi^\omega(0)[:Q^{k+1}].
    \end{align*}

    This completes the induction, so $\mu^\omega(0)[:Q^k]=\phi^\omega(0)[:Q^k]$ for all $k\in\mb{N}$. It follows that $\mu^\omega(0)[n]=\phi^\omega(0)[n]$ for all $n\in\mb{N}$. Thus, $\mu^\omega(0)=\phi^\omega(0)$. By Proposition \ref{prop:t_p_equivalent_defs}, $\phi^\omega(0)=t_p$, so we also have $\mu^\omega(0)=t_p$.
\end{proof}

By Propositions \ref{prop:delta_is_uniform} and \ref{prop:phi_is_uniform}, we know that we can express both $f_q$ and $t_p$ as the fixed points of $Q$-uniform morphisms. Proposition \ref{prop:combine_morphisms} will put these results together to describe Dekking sequences as a fixed point of a $Q$-uniform morphisms. Proposition \ref{prop:combine_morphisms} here is analogous to Proposition 1 in Dekking's original paper \cite{dekking}.

\begin{proposition}\label{prop:combine_morphisms}
    Treat $A_{p,q}=\mb{Z}/p\mb{Z}\times\mb{Z}/q\mb{Z}$ as an alphabet, and define the morphism $\lambda$ on $A_{p,q}^*$ by $\lambda((x,y))=\concat_{j=0}^{Q-1}(\mu(x)[j],\delta(y)[j])$ for all $(x,y)\in A_{p,q}$. Then  $\lambda^\omega((0,0))=z_{p,q}$, where $z_{p,q}$ is a \textit{Dekking sequence}.
\end{proposition}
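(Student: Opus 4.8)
The plan is to show that the two coordinate projections of the fixed point $\lambda^\omega((0,0))$ recover the fixed points of $\mu$ and $\delta$ separately, which Propositions \ref{prop:phi_is_uniform} and \ref{prop:delta_is_uniform} have already identified as $t_p$ and $f_q$. Reading off both coordinates will then give exactly $z_{p,q}$.

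First I would verify the hypotheses needed for $\lambda^\omega((0,0))$ to exist. Since $\mu$ and $\delta$ are both $Q$-uniform, the definition of $\lambda$ gives $|\lambda((x,y))|=Q$ for every letter $(x,y)\in A_{p,q}$, so $\lambda$ is $Q$-uniform. Moreover $\lambda((0,0))[0]=(\mu(0)[0],\delta(0)[0])=(0,0)$, because $\mu^\omega(0)$ and $\delta^\omega(0)$ both begin with $0$; hence $\lambda$ is prolongable on $(0,0)$ and the fixed point is well defined. Next I would introduce the two letter-to-letter morphisms $\pi_x:A_{p,q}^*\to A_p^*$ and $\pi_y:A_{p,q}^*\to A_q^*$ that forget the second and first coordinate respectively, i.e. $\pi_x((x,y))=x$ and $\pi_y((x,y))=y$, extended to words by concatenation.

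The heart of the argument is the pair of commutation identities $\pi_x\circ\lambda=\mu\circ\pi_x$ and $\pi_y\circ\lambda=\delta\circ\pi_y$. These follow immediately by evaluating on a single letter: for instance $\pi_x(\lambda((x,y)))=\concat_{j=0}^{Q-1}\mu(x)[j]=\mu(x)=\mu(\pi_x((x,y)))$, where the middle equality just reconstructs the word $\mu(x)$ (of length $Q$) from its letters; since both sides are morphisms, the identity holds on all of $A_{p,q}^*$. With these in hand, a one-line induction on $k$ shows $\pi_x(\lambda^k((0,0)))=\mu^k(0)$ and $\pi_y(\lambda^k((0,0)))=\delta^k(0)$ for every $k\in\mb{N}$: the base case $k=0$ is immediate, and the inductive step applies the relevant commutation identity once.

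Finally I would pass to the limit. Because $\lambda$ is $Q$-uniform and prolongable on $(0,0)$, we have $\lambda^\omega((0,0))[:Q^k]=\lambda^k((0,0))$, so applying the letter-to-letter map $\pi_x$ yields $\pi_x(\lambda^\omega((0,0))[:Q^k])=\mu^k(0)=\mu^\omega(0)[:Q^k]$, and similarly for $\pi_y$ and $\delta$. Since these agree on arbitrarily long prefixes, we conclude $\pi_x(\lambda^\omega((0,0)))=\mu^\omega(0)=t_p$ and $\pi_y(\lambda^\omega((0,0)))=\delta^\omega(0)=f_q$. Reading off both coordinates gives $\lambda^\omega((0,0))[n]=(t_p(n),f_q(n))=(t_p(n),\overline{n})=z_{p,q}(n)$ for all $n$, which is the claim. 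The only step requiring genuine care is this passage to the limit—justifying that the projection of the infinite fixed point equals the limit of the projected finite iterates—but I expect it to be only a mild obstacle, handled precisely by the prefix identity $\lambda^\omega((0,0))[:Q^k]=\lambda^k((0,0))$; everything else is a routine unwinding of the definition of $\lambda$.
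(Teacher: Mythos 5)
Your proposal is correct, but it takes a genuinely different route from the paper. The paper proves the claim by a single induction on $k$ over the statement ``$\lambda^\omega((0,0))[n]=z_{p,q}(n)$ for all $0\leq n<Q^k$'': it writes $n=mQ+r$, applies Lemma \ref{lem:morphism_is_recurrence} to reduce $\lambda^\omega((0,0))[mQ+r]$ to $\lambda(\lambda^\omega((0,0))[m])[r]$, invokes the induction hypothesis, unpacks the definition of $\lambda$ coordinatewise, and then applies Lemma \ref{lem:morphism_is_recurrence} again to $\mu$ and $\delta$ to reassemble $(\mu^\omega(0)[n],\delta^\omega(0)[n])$. You instead factor the problem through the letter-to-letter projections $\pi_x,\pi_y$, prove the intertwining identities $\pi_x\circ\lambda=\mu\circ\pi_x$ and $\pi_y\circ\lambda=\delta\circ\pi_y$, and push these through the finite iterates via the prefix identity $\lambda^\omega((0,0))[:Q^k]=\lambda^k((0,0))$ (which is exactly the auxiliary fact noted at the start of the paper's proof of Lemma \ref{lem:morphism_is_recurrence}, iterated). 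Both arguments are inductions on $k$ resting on $Q$-uniformity, but yours decouples the two coordinates entirely and never needs the index-level recurrence: it is the standard ``fibered product of morphisms'' argument, and it would generalize verbatim to combining any finite family of $Q$-uniform morphisms sharing a common seed letter. What the paper's version buys in exchange is uniformity of style with Propositions \ref{prop:delta_is_uniform} and \ref{prop:phi_is_uniform} and direct reuse of Lemma \ref{lem:morphism_is_recurrence}, which is the workhorse throughout the paper. One small merit of your write-up worth keeping in either version: you explicitly check that $\lambda((0,0))[0]=(0,0)$, i.e.\ that $\lambda$ is prolongable on $(0,0)$ so that $\lambda^\omega((0,0))$ exists; the paper uses this tacitly.
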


\begin{proof}
    We will show that for any $k\in\mb{N}$, $\lambda^\omega((0,0))[n]=z_{p,q}(n)$ for all $0\leq n<Q^k$ by induction on $k$. The base case $k=0$ holds because $\lambda^\omega((0,0))[0]=(0,0)=z_{p,q}(0)$.
    
    Now assume that for some arbitrary $k\in\mb{N}$, $\lambda^\omega((0,0))[n]=z_{p,q}(n)$ for all $0\leq n<Q^k$. Consider some $n\in\mb{N}$ with $n<Q^{k+1}$. Then we can write $n=mQ+r$ for some $m,r\in\mb{N}$ with $m<Q^k$ and $r<Q$. Using Lemma \ref{lem:morphism_is_recurrence}, Propositions \ref{prop:delta_is_uniform} and \ref{prop:phi_is_uniform}, and properties of the fixed points of morphisms, we find
    \begin{align}
        \lambda^\omega((0,0))[n]&=\lambda^\omega((0,0))[mQ+r]\\
        &=\lambda(\lambda^\omega((0,0))[m])[r]\\
        &=\lambda(z_{p,q}(m))[r]\\
        &=\lambda((t_p(m),f_q(m)))[r]\\
        &=\lambda((\mu^\omega(0)[m],\delta^\omega(0)[m]))[r]\\
        &=(\mu(\mu^\omega(0)[m])[r],\delta(\delta^\omega(0)[m])[r])\\
        &=(\mu^\omega(0)[mQ+r],\delta^\omega(0)[mQ+r])\\
        &=(\mu^\omega(0)[n],\delta^\omega(0)[n])\\
        &=(t_p(n),f_q(n))\\
        &=z_{p,q}(n).
    \end{align}
    Equality (2) is by Lemma \ref{lem:morphism_is_recurrence}, (3) is by the induction hypothesis, (5) is by Propositions 3 and 4, (6) is by the definition of $\lambda$ and the fact that $r<Q$, and (7) is again by Lemma \ref{lem:morphism_is_recurrence}. This completes the induction, so we have $\lambda^\omega((0,0))[n]=z_{p,q}(n)$ for all $n\in\mb{N}$. Therefore, $\lambda^\omega((0,0))=z_{p,q}$.
\end{proof}

The existence of the morphism $\lambda$ will help us to better understand Dekking sequences when they are put into a geometric context.

\subsection{Dekking Curves}

Now that we have defined Dekking sequences, we can use them to describe the sums $Z(N,p,q)$ as absolute turtle curves.

\begin{definition}
    Let $p,q\geq2$ and $k$ be integers with $\gcd(k,q)=1$. Let $\zeta_p=e^{\frac{2\pi i}{p}}$ and $\zeta_q=e^{\frac{2\pi i}{q}}$. The \textit{Dekking curve} $D_{p,q,k}$ is defined to be the absolute turtle curve $D_{p,q,k}=(z_{p,q},\tau)$, where $\tau((x,y))=\zeta_p^x\zeta_q^{ky}$ for all $(x,y)\in A_{p,q}$.
\end{definition}

For a visual, Figure \ref{fig:D_curves} shows some examples of Dekking curves. Similar figures appear in Dekking's original paper \cite{dekking}.

\begin{figure}[H]
    \centering
    \includegraphics[scale=0.6]{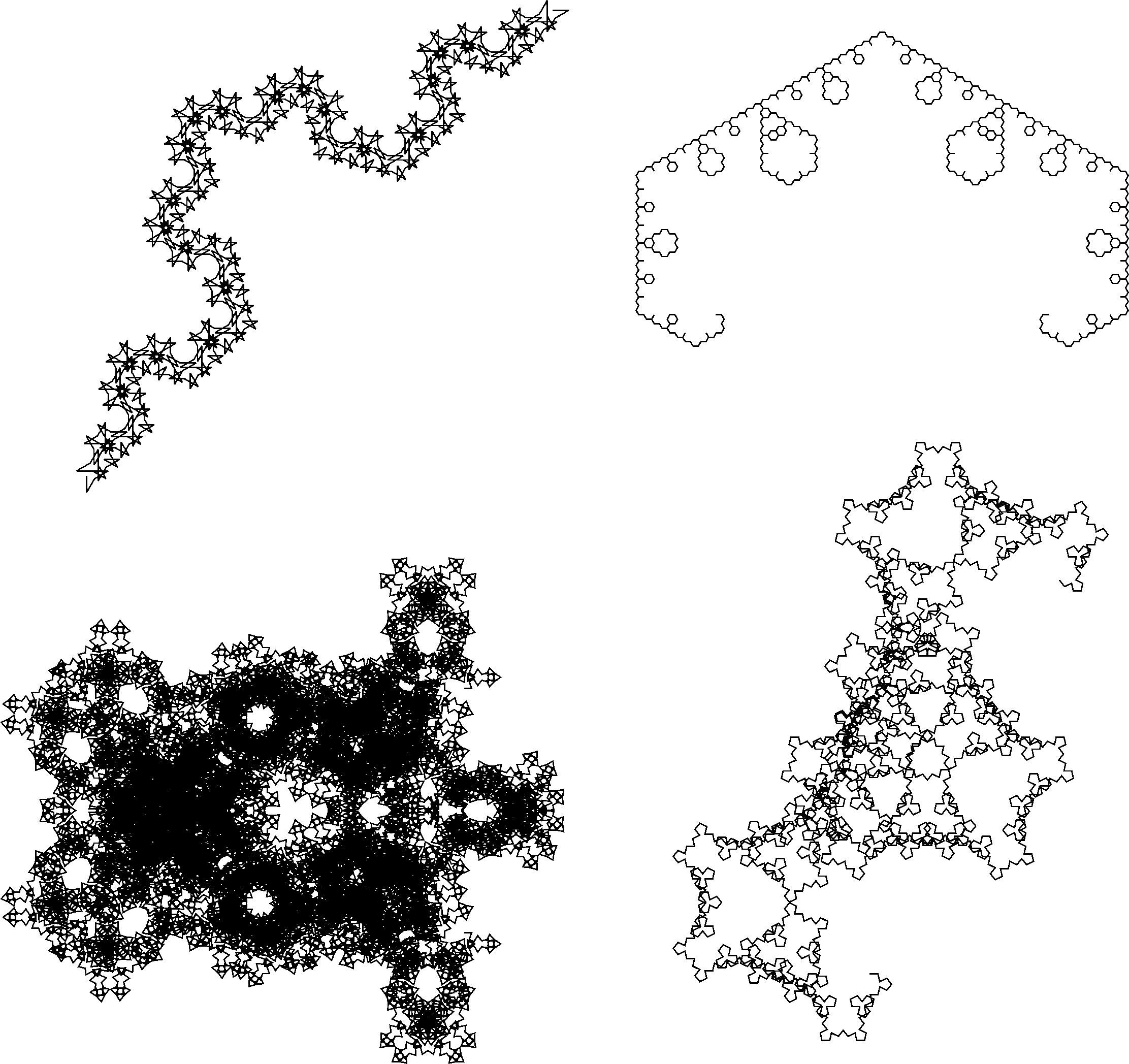}
    \caption{The first $2^{10}$ steps of $D_{2,12,1}$ (top left), the first $3^{6}$ steps of $D_{3,2,1}$ (top right), the first $2^{15}$ steps of $D_{2,31,6}$ (bottom left), and the first $2^{11}$ steps of $D_{2,7,2}$ (bottom right). The curve $D_{2,31,6}$ was chosen because it resembles a sea turtle.}
    \label{fig:D_curves}
\end{figure}

In more explicit terms, the $N$th point of $D_{p,q,k}$ can be evaluated as
\begin{align*}
    D_{p,q,k}(N)&=\sum_{i=0}^{N-1}\tau(z_{p,q}(i))\\
    &=\sum_{i=0}^{N-1}\tau((t_p(i),f_q(i)))\\
    &=\sum_{i=0}^{N-1}\zeta_p^{t_p(i)}\zeta_q^{kj}.
\end{align*}

Observe that in the case where $k=1$, $D_{p,q,1}(N)=Z(N,p,q)$, so our definition coincides with the sums studied by Dekking in \cite{dekking}. One nice property of Dekking curves is that each term in the sum is a root of unity. Even better, we have the following fact.

\begin{proposition}\label{prop:P_D_morphism_to_S1}
    Let $D=D_{p,q,k}$ be a Dekking curve. Then the function $P_D$ is a homomorphism from $(A_{p,q},+)$ to $(\mb{S}^1,\cdot)$.
\end{proposition}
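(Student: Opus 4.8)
The plan is to reduce $P_D$ on a single symbol to an explicit product of roots of unity and then verify the homomorphism identity by the laws of exponents. First I would note that for a one-letter input the function $S$ acts trivially (it is the sum of a single group element), so for a symbol $(x,y)\in A_{p,q}$ we have $P_D((x,y)) = (\pi_1\circ S\circ\tau)((x,y)) = \pi_1(\tau((x,y)))$. Since $D$ is an absolute turtle curve, $\tau((x,y))\in\mc{A}$ corresponds to the complex number $\zeta_p^x\zeta_q^{ky}$, so $P_D((x,y))=\zeta_p^x\zeta_q^{ky}$. Because $\zeta_p$ and $\zeta_q$ are roots of unity, this value has modulus $1$ and hence lies in $\mb{S}^1$, confirming that the stated codomain is correct.

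Before checking multiplicativity I would confirm that $P_D$ is well-defined as a function on the quotient group $A_{p,q}=\mb{Z}/p\mb{Z}\times\mb{Z}/q\mb{Z}$, since its arguments are congruence classes rather than integers. The $x$-coordinate enters only through $\zeta_p^x$, and $\zeta_p^p=1$ means this depends only on $x$ modulo $p$; likewise $\zeta_q^{ky}$ depends only on $y$ modulo $q$ because $\zeta_q^{kq}=(\zeta_q^q)^k=1$. (In particular, the coprimality hypothesis $\gcd(k,q)=1$ is not needed for this step.) Thus any choice of integer representatives of $x$ and $y$ yields the same value.

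With these observations in hand, the homomorphism property is a direct computation. Given $(x_1,y_1),(x_2,y_2)\in A_{p,q}$, their sum in the group is $(x_1+x_2,\,y_1+y_2)$, so
$$P_D\big((x_1,y_1)+(x_2,y_2)\big)=\zeta_p^{x_1+x_2}\zeta_q^{k(y_1+y_2)}=\left(\zeta_p^{x_1}\zeta_q^{ky_1}\right)\left(\zeta_p^{x_2}\zeta_q^{ky_2}\right)=P_D((x_1,y_1))\,P_D((x_2,y_2)),$$
where the middle equality is simply additivity of exponents. This establishes the claim.

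I do not anticipate a genuine obstacle here; the only point requiring care is the well-definedness on the two cyclic quotients, which is precisely what legitimizes the clean exponent manipulation in the final step. The underlying reason the result holds is that $\tau$ was chosen so that $P_D$ factors as a product of characters on the two factor groups $\mb{Z}/p\mb{Z}$ and $\mb{Z}/q\mb{Z}$, making it a character of $A_{p,q}$ and therefore automatically a homomorphism into $(\mb{S}^1,\cdot)$.
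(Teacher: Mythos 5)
Your proof is correct and takes essentially the same approach as the paper, whose argument is exactly your final display: expand $P_D((x_1+x_2,y_1+y_2))=\zeta_p^{x_1+x_2}\zeta_q^{k(y_1+y_2)}$ and split by additivity of exponents. Your additional checks (that $P_D$ on a single symbol is just $\pi_1\circ\tau$, and that the value is well-defined on the congruence classes) are correct and slightly more careful than the paper, but they do not change the route.
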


\begin{proof}
    For any $(x_1,y_1),(x_2,y_2)\in A_{p,q}$, we have
    \begin{align*}
        P_D((x_1,y_1)+(x_2,y_2))&=P_D((x_1+x_2,y_1+y_2))\\
        &=\zeta_p^{x_1+x_2}\zeta_q^{k(y_1+y_2)}\\
        &=(\zeta_p^{x_1}\zeta_q^{ky_1})(\zeta_p^{x_2}\zeta_q^{ky_2})\\
        &=P_D((x_1,y_1))P_D((x_2,y_2)).
    \end{align*}
\end{proof}

As one might notice in Figure \ref{fig:D_curves}, Dekking curves exhibit a fractal-like structure, with smaller versions of the entire curve visible within each curve. This is ultimately a consequence of the fact that the sequence $z_{p,q}$ is morphic, as revealed by Proposition \ref{prop:D_self_similar}.

\begin{proposition}\label{prop:D_self_similar}
    Let $D=D_{p,q,k}$ be a Dekking curve with $\gcd(p,q)=1$. Let $r=P_D(\lambda((0,0)))$, where $\lambda$ is the $Q$-uniform morphism defined in Proposition \ref{prop:combine_morphisms}. Then $D(Qn)=rD(n)$ for all $n\in\mb{N}$.
\end{proposition}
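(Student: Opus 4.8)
The plan is to reduce the self-similarity relation to a single statement about how $P_D$ interacts with the morphism $\lambda$. Since $D$ is an absolute turtle curve, $P_D$ is a homomorphism into $(\mb{C},+)$, so unwinding the definitions gives $D(N)=P_D(z_{p,q}[:N])$. Because $\gcd(p,q)=1$, Proposition \ref{prop:combine_morphisms} yields $z_{p,q}=\lambda^\omega((0,0))$, and the general fact about $Q$-uniform morphisms recorded in the proof of Lemma \ref{lem:morphism_is_recurrence} gives $z_{p,q}[:Qn]=\lambda(z_{p,q}[:n])$. Hence $D(Qn)=P_D(\lambda(z_{p,q}[:n]))$, and it suffices to prove the word-level identity $P_D(\lambda(w))=r\,P_D(w)$ for every $w\in A_{p,q}^*$. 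As both $P_D$ and $\lambda$ are morphisms, this will follow once it is established for single symbols, so I would first focus on showing $P_D(\lambda((x,y)))=r\,P_D((x,y))$ for each $(x,y)\in A_{p,q}$.

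The key structural observation, and the step I expect to be the main obstacle, is that $\lambda$ is \emph{equivariant under adding a constant symbol}: I claim $\lambda((x,y))[j]=\lambda((0,0))[j]+(x,y)$ for all $j$. To establish this I would first check that $\phi$ commutes with the operation of adding a fixed constant $c$ to every symbol of a word, which is immediate from $\phi(a+c)[j]=(a+c)+f_p(j)=c+\phi(a)[j]$; iterating this gives $\mu(x)=\phi^{\varphi(q)}(x)=\phi^{\varphi(q)}(0)+x=\mu(0)+x$ symbolwise. The analogous (and easier) fact $\delta(y)[j]=\delta(0)[j]+y$ is read directly off the definition of $\delta$. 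Combining the two coordinates yields the claimed shift identity for $\lambda$.

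With the shift identity in hand, the remainder is bookkeeping using Proposition \ref{prop:P_D_morphism_to_S1}, which says $P_D$ is a homomorphism from $(A_{p,q},+)$ to $(\mb{S}^1,\cdot)$ on single symbols. I would compute
\begin{align*}
    P_D(\lambda((x,y))) &= \sum_{j=0}^{Q-1} P_D\big(\lambda((0,0))[j] + (x,y)\big)\\
    &= \sum_{j=0}^{Q-1} P_D\big(\lambda((0,0))[j]\big)\,P_D((x,y))\\
    &= \Big(\sum_{j=0}^{Q-1} P_D\big(\lambda((0,0))[j]\big)\Big) P_D((x,y))\\
    &= P_D(\lambda((0,0)))\,P_D((x,y)) = r\,P_D((x,y)),
\end{align*}
where the final sum collapses to $P_D(\lambda((0,0)))=r$ because $P_D$ on a word is the $(\mb{C},+)$-sum of its symbols' values. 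Extending across an arbitrary word $w=a_1\cdots a_n$ via the homomorphism properties of $\lambda$ and $P_D$ then gives $P_D(\lambda(w))=\sum_i P_D(\lambda(a_i))=r\sum_i P_D(a_i)=r\,P_D(w)$. Substituting $w=z_{p,q}[:n]$ and chaining the identities from the first paragraph completes the argument with $D(Qn)=r\,D(n)$.
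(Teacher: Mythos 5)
Your proposal is correct and follows essentially the same route as the paper's proof: both rest on the shift identity $\lambda((x,y))[j]=(x,y)+\lambda((0,0))[j]$ together with the multiplicativity of $P_D$ on symbols (Proposition \ref{prop:P_D_morphism_to_S1}) to factor $r$ out of the expanded sum. Your version merely packages the computation as a symbol-level lemma $P_D(\lambda(a))=rP_D(a)$ before extending to words, and is slightly more careful than the paper in justifying $\mu(x)=\mu(0)+x$ symbolwise.
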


\begin{proof}

Using the fact that $z_{p,q}$ is the fixed point of the morphism $\lambda$ and $P_D$ is a homomorphism, we have
\begin{align*}
    D(Qn)&=P_D(z_{p,q}[:Qn])\\
    &=P_D(\lambda(z_{p,q}[:n]))\\
    &=P_D(\lambda(\concat_{i=0}^{n-1}z_{p,q}(i)))\\
    &=P_D(\concat_{i=0}^{n-1}\lambda(z_{p,q}(i)))\\
    &=\sum_{i=0}^{n-1}P_D(\lambda(z_{p,q}(i)))
\end{align*}

Recall that the morphism $\lambda$ is defined by $\lambda((x,y))=\concat_{j=0}^{Q-1}(\mu(x)[j],\delta(y)[j])$, where $\mu$ and $\delta$ are the morphisms defined in Propositions \ref{prop:phi_is_uniform} and \ref{prop:delta_is_uniform} respectively. From those definitions, we have
\begin{align*}
    \lambda((x,y))&=\concat_{j=0}^{Q-1}(x+t_p(j),y+f_q(j))\\
    &=\concat_{j=0}^{Q-1}\left((x+y)+(t_p(j),f_q(j))\right)\\
    &=\concat_{j=0}^{Q-1}\left((x+y)+\lambda((0,0))[j]\right)
\end{align*}

We can use this to continue our previous calculation.
\begin{align*}
    D(Qn)&=\sum_{i=0}^{n-1}P_D(\lambda(z_{p,q}(i)))\\
    &=\sum_{i=0}^{n-1}P_D(\concat_{j=0}^{Q-1}(z_{p,q}(i)+\lambda((0,0))[j])))\\
    &=\sum_{i=0}^{n-1}\sum_{j=0}^{Q-1}P_D(z_{p,q}(i)+\lambda((0,0))[j]))
\end{align*}

By Proposition \ref{prop:P_D_morphism_to_S1}, we can view $P_D$ has a homomorphism from $(A_{p,q},+)$ to $(\mb{S}^1,\cdot)$, which yields our desired result.
\begin{align*}
    D(Qn)&=\sum_{i=0}^{n-1}\sum_{j=0}^{Q-1}P_D(z_{p,q}(i))P_D(\lambda((0,0))[j])\\
    &=\sum_{i=0}^{n-1}P_D(z_{p,q}(i))\sum_{j=0}^{Q-1}P_D(\lambda((0,0))[j])\\
    &=\sum_{i=0}^{n-1}P_D(z_{p,q}(i))P_D\left(\concat_{j=0}^{Q-1}\lambda((0,0))[j]\right)\\
    &=\sum_{i=0}^{n-1}P_D(z_{p,q}(i))P_D(\lambda((0,0)))\\
    &=\sum_{i=0}^{n-1}P_D(z_{p,q}(i))r\\
    &=r\sum_{i=0}^{n-1}P_D(z_{p,q}(i))\\
    &=rD(n)
\end{align*}

\end{proof}

Note that because of the self similar property shown in Proposition \ref{prop:D_self_similar}, the value of $r=P_D(\lambda((0,0)))$ will play an important role in determining the large scale geometric structure of Dekking curves. In order for scaled segments of a Dekking curve to converge to some limit curve, the Dekking curve will need to have the following property.

\begin{definition}
    Let $D=D_{p,q,k}$ be a Dekking curve with $\gcd(p,q)=1$. Then the \textit{scaling factor} of $D$ is $r=P_D(\lambda((0,0)))$, where $\lambda$ is the $Q$-uniform morphism defined in Proposition \ref{prop:combine_morphisms}. The Dekking curve $D$ is said to be \textit{regular} if $|r|>1$.
\end{definition}

The convergence of regular Dekking curves will be proven in Theorem \ref{thm:dekking_convergence}.

\section{Results}

In this section we introduce the concept of two turtle curves being similar, and we show that scaled segments of similar turtle curves will converge to the same fractal curve in the Hausdorff metric. We then establish several results to show the similarity of certain Thue-Morse curves and Dekking curves, and conclude that those curves will converge to the same limit curves.

\subsection{Similarity of Turtle Curves}

Experimentally, it is often easy to see when two turtle curves share the same underlying geometric structure. For example, the two curves in Figure \ref{fig:similar_TCs} clearly have a resemblance, even if they look very different on a small scale. To make this idea formal, we introduce the following definition.

\begin{definition}
    Two turtle curves $T_1,T_2$ are called \textit{similar} if there exist positive integers $k_1,k_2$ and $c\in\mb{C}^\times$ such that $c\cdot T_1(k_1n)=T_2(k_2n)$ for all $n\in\mb{N}$. We then write $T_1\sim T_2$.
\end{definition}

So two turtle curves are similar if there is a way of scaling them such that their points will periodically coincide. 

\begin{figure}[H]
    \centering
    \includegraphics[scale=0.6]{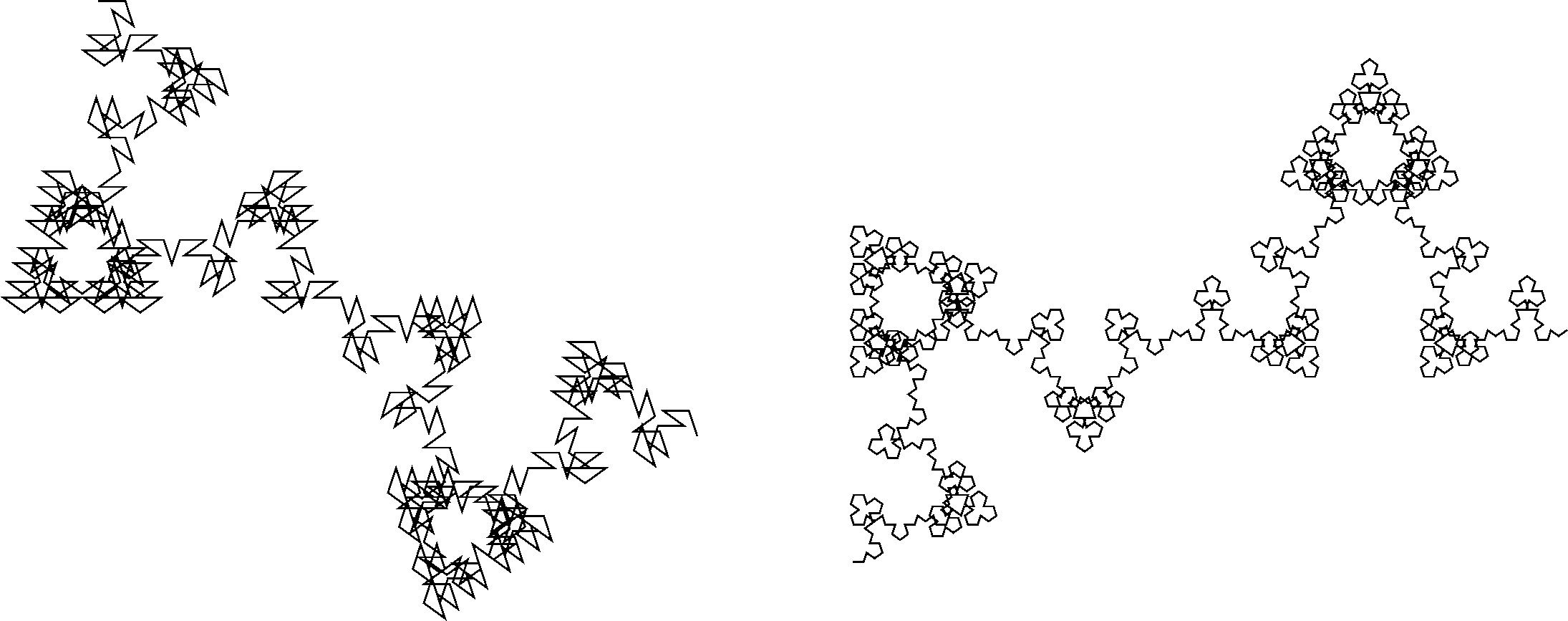}
    \caption{On the left is the Thue-Morse turtle curve $T=(t_2,\tau)$, where $\tau(0)=(1,\zeta_5^2)$ and $\tau(1)=(0,\zeta_5)$. On the right is the Dekking curve $D_{2,10,7}$. Theorem \ref{thm:main_result} will show that $T\sim D_{2,10,7}$.}
    \label{fig:similar_TCs}
\end{figure}

We also have the following proposition, which will be very helpful when we show similarity between Thue-Morse Curves and Dekking curves.

\begin{proposition}
    The similarity relation $\sim$ is an equivalence relation.
\end{proposition}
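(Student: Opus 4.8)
The plan is to verify the three defining properties of an equivalence relation — reflexivity, symmetry, and transitivity — directly from the definition of $\sim$. Reflexivity is immediate: for any turtle curve $T$, taking $k_1 = k_2 = 1$ and $c = 1$ gives $c\cdot T(k_1 n) = T(k_2 n)$ for all $n\in\mb{N}$, so $T\sim T$.

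For symmetry, I would suppose $T_1\sim T_2$, witnessed by positive integers $k_1,k_2$ and $c\in\mb{C}^\times$ with $c\cdot T_1(k_1 n)=T_2(k_2 n)$. Since $c\neq 0$, I can rearrange using the witnesses $k_2,k_1$ (with their roles swapped) together with $c^{-1}$ to obtain $c^{-1}\cdot T_2(k_2 n)=T_1(k_1 n)$, which is precisely the condition for $T_2\sim T_1$. The only point to note is that $c^{-1}\in\mb{C}^\times$, which holds because $\mb{C}^\times$ is a group under multiplication.

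The main obstacle is transitivity, where the two similarity relations sample the shared curve at two different periods that must be reconciled. Suppose $T_1\sim T_2$ via $a_1,a_2,c$ (so $c\cdot T_1(a_1 n)=T_2(a_2 n)$) and $T_2\sim T_3$ via $b_1,b_2,d$ (so $d\cdot T_2(b_1 m)=T_3(b_2 m)$). The first relation constrains $T_2$ only at indices that are multiples of $a_2$, while the second constrains $T_2$ only at multiples of $b_1$; to chain them I would restrict attention to indices lying in both arithmetic progressions. Setting $L=\operatorname{lcm}(a_2,b_1)$, every index of the form $Lj$ satisfies $Lj = a_2\,(L/a_2)\,j = b_1\,(L/b_1)\,j$, so substituting $n=(L/a_2)j$ into the first relation and $m=(L/b_1)j$ into the second yields $T_3\!\left(b_2(L/b_1)j\right)=d\,T_2(Lj)=dc\,T_1\!\left(a_1(L/a_2)j\right)$.

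I would then read off the witnesses for $T_1\sim T_3$: the periods $e_1=a_1(L/a_2)$ and $e_2=b_2(L/b_1)$ are positive integers because $L/a_2$ and $L/b_1$ are positive integers (as $L$ is a common multiple of $a_2$ and $b_1$), and the scalar $f=cd$ lies in $\mb{C}^\times$ since it is a product of nonzero complex numbers. This gives $f\cdot T_1(e_1 j)=T_3(e_2 j)$ for all $j\in\mb{N}$, establishing $T_1\sim T_3$. The conceptual crux is recognizing that the two relations overlap only on the common refinement of the two sampling lattices, which the least common multiple provides; once the indices are aligned, the scalars simply multiply.
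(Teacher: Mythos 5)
Your proof is correct and takes essentially the same route as the paper: reflexivity and symmetry are handled identically (the paper's symmetry argument is just the terse remark that $c$ is invertible, which you spell out), and for transitivity both arguments align the two samplings of $T_2$ on a common multiple of the periods $a_2$ and $b_1$ --- you use $\operatorname{lcm}(a_2,b_1)$ where the paper simply uses the product, an immaterial difference. No gaps.
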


\begin{proof}
    We can see that $\sim$ is reflexive by taking $c=1$ and $k_1=k_2=1$. It is also easy to see that $\sim$ is reflexive, as $c$ is required to be invertible. To show transitivity, let $T_1,T_2,T_3$ be turtle curves with $T_1\sim T_2$ and $T_2\sim T_3$. Then by definition, there exist $c,d\in\mb{C}$ and positive integers $k_1,k_2,m_1,m_2$ such that
    \begin{equation}
        c\cdot T_1(k_1n)=(T_2(k_2n)\tag{1}
    \end{equation}
    and
    \begin{equation}
        d\cdot T_2(m_1n)=T_3(m_2n)\tag{2}
    \end{equation}
    for all $n\in\mb{N}$. Because (1) is true for all $n\in\mb{N}$, we can replace $n$ with $m_1n$, yielding that
    \begin{equation}
        c\cdot T_1(k_1m_1n)=T_2(k_2m_1n)\tag{3}
    \end{equation}
    for all $n\in\mb{N}$. Similarly, because (2) is true for all $n\in\mb{N}$, we can replace $n$ with $k_2n$, yielding that
    \begin{equation}
        d\cdot T_2(k_2m_1n)=T_3(k_2m_2n)\tag{4}
    \end{equation}
    for all $n\in\mb{N}$. Then substituting (3) into (4) gives that
    \begin{equation*}
        cd\cdot T_1(k_1m_1n)=T_3(k_2m_2n)
    \end{equation*}
    for all $n\in\mb{N}$. Because $c,d\neq0$, $cd\neq0$, and because $k_1,k_2,m_1,m_2$ are all positive integers, $k_1m_1$ and $k_2m_2$ are both positive integers. Therefore, $T_1\sim T_3$, and we have shown that $\sim$ is an equivalence relation.
\end{proof}

\subsection{Convergence of Turtle Curves}

In this section we will establish several results on the convergence of scaled turtle curves in the Hausdorff metric. In particular, establish the convergence of scaled Dekking curves, and we show that similar turtle curves will share the same limit curves. To do this, we will need a concept of distance between subsets of $\mb{C}$, which can be achieved using the Hausdorff metric. For our purposes, the Hausdorff metric on $\mb{C}$ is defined as follows.

\begin{definition}
    For any $x,y\in\mb{C}$ define $d(x,y)=|x-y|$. Let $\mc{H}(\mb{C})$ be the set of all nonempty subsets of $\mb{C}$ which are compact with respect to $d$. For any $x\in\mb{C}$ and $Y\in\mc{H}(\mb{C})$, define $d(x,Y)=\inf\{d(x,y):y\in Y\}$. Then the \textit{Hausdorff distance} between any $X,Y\in\mc{H}(\mb{C})$ is defined as
    $$d_H(X,Y)=\max\{\sup\{d(x,Y):x\in X\},\sup\{d(y,X):y\in Y\}\}.$$
\end{definition}

From this definition, the following fact is well known.

\begin{proposition}
    The set $\mc{H}(\mb{C})$ equipped with the distance metric $d_H$ forms a complete metric space.
\end{proposition}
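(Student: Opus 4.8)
The plan is to proceed in two phases: first verify that $d_H$ is genuinely a metric on $\mc{H}(\mb{C})$, and then establish completeness by showing that every Cauchy sequence of compact sets converges to an explicitly constructed compact limit. For the metric axioms, symmetry and nonnegativity are immediate from the definition. The only delicate point among the elementary axioms is that $d_H(X,Y)=0$ forces $X=Y$, and here compactness is essential: if $\sup\{d(x,Y):x\in X\}=0$ then every $x\in X$ satisfies $d(x,Y)=0$, and since $Y$ is compact and hence closed this gives $x\in Y$, so $X\subseteq Y$, and symmetrically $Y\subseteq X$. For the triangle inequality I would first prove the pointwise estimate $d(x,Z)\le d(x,y)+d(y,Z)$ for each fixed $y\in Y$, minimize the right side over $y\in Y$ to obtain $d(x,Z)\le d(x,Y)+\sup\{d(y,Z):y\in Y\}$, then take the supremum over $x\in X$ and symmetrize to conclude $d_H(X,Z)\le d_H(X,Y)+d_H(Y,Z)$.

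For completeness, let $(X_n)$ be a Cauchy sequence in $\mc{H}(\mb{C})$, and define the candidate limit
$$X=\{x\in\mb{C}:\text{there exist }x_n\in X_n\text{ with }x_n\to x\}.$$
I would then argue in three stages. First, $X\neq\emptyset$: pass to a subsequence $(X_{n_k})$ with $d_H(X_{n_k},X_{n_{k+1}})<2^{-k}$, choose $x_{n_1}\in X_{n_1}$ arbitrarily, and recursively pick $x_{n_{k+1}}\in X_{n_{k+1}}$ realizing the distance $d(x_{n_k},X_{n_{k+1}})<2^{-k}$ (possible since this distance is at most $d_H(X_{n_k},X_{n_{k+1}})$ and the infimum is attained by compactness). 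The resulting sequence is Cauchy in $\mb{C}$, so by completeness of $\mb{C}$ it converges to some $x$, which one completes to a full sequence $x_n\in X_n$ (taking nearest points) converging to $x$, placing $x\in X$. Second, $X$ is compact: a Cauchy sequence in $d_H$ is uniformly bounded, so $X$ is bounded, and $X$ is closed by a standard diagonal extraction, whence $X$ is compact in $\mb{C}$ by Heine–Borel. Third, $X_n\to X$: given $\epsilon>0$, pick $N$ with $d_H(X_m,X_n)<\epsilon$ for all $m,n\ge N$; one then bounds $\sup\{d(y,X):y\in X_n\}$ by building a convergent tail starting from $y$, and bounds $\sup\{d(x,X_n):x\in X\}$ directly from the defining sequences of $X$, so both suprema fall below $\epsilon$ for large $n$.

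The main obstacle is the nonemptiness-and-convergence stage, where one must actually produce limit points and therefore genuinely use completeness of $\mb{C}$; the subsequence trick with geometrically decreasing Hausdorff distances is precisely what makes the constructed point sequences Cauchy in $\mb{C}$, and without it the limit set could collapse. Because this is the classical hyperspace completeness theorem (the statement that $(\mc{H}(\mathbb{X}),d_H)$ is complete whenever $(\mathbb{X},d)$ is complete, as in Barnsley's treatment of fractals), and the paper only needs it to guarantee limits of its later Cauchy sequences of scaled turtle curves, I would present the argument in this condensed form or simply cite it rather than grind through the routine estimates in full detail.
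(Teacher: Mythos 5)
Your sketch is a correct outline of the classical completeness theorem for the hyperspace of compact sets, but there is nothing in the paper to compare it against: the paper states this proposition as a well-known fact and gives no proof at all, which is exactly the alternative you yourself suggest in your closing sentence (citing it, e.g.\ to Barnsley's treatment). So your instinct about how much detail the result deserves matches the paper's actual treatment. If you did write the argument out in full, the two places that need the most care are precisely the ones you flag only briefly: (i) extending the Cauchy subsequence $x_{n_k}\in X_{n_k}$ to a full sequence $x_n\in X_n$ converging to the same limit, so that the limit point genuinely belongs to your candidate set $X$ as you defined it; and (ii) the closedness of $X$, where the ``standard diagonal extraction'' must be done against the defining sequences rather than against the sets $X_n$ themselves. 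Both are routine but are the steps most often botched, and since the ambient space here is $\mb{C}$, Heine--Borel does legitimately reduce compactness of $X$ to closed-and-bounded as you claim. In short: the proposal is correct, more detailed than the paper, and could be compressed to a citation without loss.
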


Note that for any turtle curve $T$ and $n\in\mb{N}$, the set $\mc{P}(T[:n])$ is compact, as it is a finite union of compact sets $\ell(T(i),T(i+1))$. We also know $\mc{P}(T[:n])$ is nonempty because we will always have $0\in\ell(T(0),T(1))\subseteq\mc{P}(T[:n])$. Therefore, the Hausdorff metric can be used to measure the distance between any two finite turtle curve segments. This allows for the following definition.

\begin{definition}
    We say that $K\in\mc{H}(\mb{C})$ is a \textit{limit curve} of a turtle curve $T$ if there exists $r,c\in\mb{C}^\times$ with $|r|>1$ and a sequence of natural numbers $(b_n)$ such that
    $$\lim_{n\to\infty}r^{-n}\mc{P}(T[:b_n])=cK$$
    in the Hausdorff metric. We say that $K$ is \textit{nontrivial} if $K\neq\{0\}$.
\end{definition}

Note that a turtle curve $T$ has $\{0\}$ as a limit curve if and only if $\mc{P}(T)$ is bounded. The following theorem establishes that certain Dekking curves have a nontrivial limit curve.

\begin{theorem}\label{thm:dekking_convergence}
    Let $D=D_{p,q,k}$ be a regular Dekking curve with scaling factor $r$, and let $Q=p^{\varphi(q)}$. Then $D$ has a nontrivial limit curve $K$ with
    $$\lim_{n\to\infty}r^{-n}\mc{P}(D[:Q^n])=K.$$
\end{theorem}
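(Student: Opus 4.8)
The plan is to exhibit the normalized sets $r^{-n}\mc{P}(D[:Q^n])$ as a Cauchy sequence in the complete metric space $(\mc{H}(\mb{C}),d_H)$ and to take $K$ to be its limit. The engine driving the estimate is the self-similarity $D(Qn)=rD(n)$ from Proposition~\ref{prop:D_self_similar}, together with the fact that $D$ advances by unit steps: since $P_D$ sends single symbols into $\mb{S}^1$ (Proposition~\ref{prop:P_D_morphism_to_S1}), we have $|D(i+1)-D(i)|=|P_D(z_{p,q}(i))|=1$ for every $i$.

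I would first pass to the ``closed'' approximants $G_n=r^{-n}\bigcup_{i=0}^{Q^n-1}\ell(D(i),D(i+1))$, which include the terminal segment ending at $D(Q^n)$; since $G_n$ differs from $r^{-n}\mc{P}(D[:Q^n])$ by a single segment of length $1$, the two sequences stay within Hausdorff distance $|r|^{-n}\to0$ of each other, so they have the same limit once one is shown to exist. The heart of the argument is bounding $d_H(G_{n+1},G_n)$. I would observe that the vertices $D(Qm)=rD(m)$ for $0\le m\le Q^n$ are exactly the vertices of the scaled coarse curve $r\bigcup_{i=0}^{Q^n-1}\ell(D(i),D(i+1))$, so the unscaled fine curve $\bigcup_{i=0}^{Q^{n+1}-1}\ell(D(i),D(i+1))$ splits into $Q^n$ consecutive blocks, the $m$th running from $D(Qm)$ to $D(Q(m+1))$. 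Because each block spans an index window of length $Q$ and every step has length $1$, the $m$th block lies within distance $Q$ of its initial vertex $rD(m)$; conversely each straight edge of the coarse curve has length $|r|$ and shares both endpoints with the corresponding block. This gives the uniform bound $d_H\big(\bigcup_{i=0}^{Q^{n+1}-1}\ell(D(i),D(i+1)),\,r\bigcup_{i=0}^{Q^n-1}\ell(D(i),D(i+1))\big)\le\max(Q,|r|/2)=:C$, independent of $n$. Since $d_H$ scales by $|c|$ under multiplication by $c\in\mb{C}^\times$ and $r^{-(n+1)}\cdot r\bigcup_{i=0}^{Q^n-1}\ell(D(i),D(i+1))=G_n$, dividing by $r^{n+1}$ yields $d_H(G_{n+1},G_n)\le C|r|^{-(n+1)}$.

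Regularity ($|r|>1$) then makes $\sum_n C|r|^{-(n+1)}$ a convergent geometric series, so $(G_n)$ is Cauchy and converges to some $K\in\mc{H}(\mb{C})$ by completeness; by the remark above $r^{-n}\mc{P}(D[:Q^n])\to K$ as well, which is exactly the limit-curve condition with $c=1$ and $b_n=Q^n$. For nontriviality I would iterate Proposition~\ref{prop:D_self_similar} to get $D(Q^n)=r^nD(1)=r^n$ (using $D(1)=P_D((0,0))=1$), so every $G_n$ contains both $0=r^{-n}D(0)$ and $1=r^{-n}D(Q^n)$; since a point belonging to every $G_n$ belongs to the Hausdorff limit, $\{0,1\}\subseteq K$ and hence $K\neq\{0\}$.

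The step I expect to demand the most care is the uniform Hausdorff estimate: I must ensure the block decomposition is genuinely exact (which is precisely why I work with $G_n$ rather than $\mc{P}(D[:Q^n])$), that both inclusion directions of $d_H$ are controlled, and that the resulting bound $C$ truly does not depend on $n$. Everything afterward — summing the geometric series, invoking completeness, transferring back to the $\mc{P}(D[:Q^n])$ normalization, and deducing nontriviality — is routine bookkeeping.
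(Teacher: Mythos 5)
Your proposal is correct and follows essentially the same route as the paper's proof: both use the self-similarity $D(Qn)=rD(n)$ together with the unit step length to bound $d_H$ between consecutive normalized approximants by a constant times $|r|^{-n}$, sum the geometric series to get a Cauchy sequence in the complete space $(\mc{H}(\mb{C}),d_H)$, and certify nontriviality by observing that $1=r^{-n}D(Q^n)$ lies in every approximant. Your explicit handling of the terminal segment (the $G_n$ versus $\mc{P}(D[:Q^n])$ distinction) is a minor tidying of an off-by-one the paper glosses over, not a different argument.
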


\begin{proof}
    For any $n\in\mb{N}$, let $S_n=r^{-n}\mc{P}(D[:Q^n])$. We would like to show that there exists some $K\neq\{0\}$ such that
    $$\lim_{n\to\infty}S_n=K$$
    in the Hausdorff metric. First we will show that this limit exists. Because the Hausdorff metric is complete, it suffices to show that the sequence $(S_n)$ is Cauchy. We will start by finding an upper bound for $d_H(S_n,S_{n+1})$ in terms of $n$. 

    Let $x\in S_n$. We would like to find an upper bound on $d(x,S_{n+1})$. First see that
    $$x\in S_n=r^{-n}\mc{P}(D[:Q^n])=r^{-n}\bigcup_{i=0}^{Q^n-1}\ell(D(i),D(i+1))=\bigcup_{i=0}^{Q^n-1}r^{-n}\ell(D(i),D(i+1)),$$
    so there exists some integer $0\leq m<Q^n$ such that $x\in r^{-n}\ell(D(m),D(m+1))$. Multiplying by $r^n$, we have $r^nx\in\ell(D(m),D(m+1))$. The distance between consecutive points of a Dekking curve is always 1, so  $r^nx\in\ell(D(m),D(m+1))$ implies that $d(r^nx,D(m))\leq1$. Multiplying by $r^{-n}$, we have $d(x,r^{-n}D(m))\leq r^{-n}$. By Proposition \ref{prop:D_self_similar}, we know $r^{-n}D(m)=r^{-(n+1)}D(Qm)$. Since $0\leq m<Q^n$, we have $0\leq Qm<Q^{n+1}$. But then
    $$r^{-(n+1)}D(Qm)\in r^{-(n+1)}\ell(D(Qm), D(Qm+1))\subseteq S_{n+1}.$$
    So because $d(x,r^{-(n+1)}D(Qm))=d(x,r^{-n}D(m))\leq r^{-n}$ and $r^{-(n+1)}D(Qm)\in S_{n+1}$, we have found that
    $$d(x,S_{n+1})=\inf\{d(x,s):s\in S_{n+1}\}\leq d(x,r^{-(n+1)}D(Qm))\leq r^{-n}.$$
    The choice of $x\in S_n$ was arbitrary, so we have
    $$\sup\{d(x,S_{n+1}):x\in S_n\}\leq r^{-n}.$$

    Now let $x\in S_{n+1}$. We would like to find a bound on $d(x,S_n)$. Similar to before, we have
    $$x\in S_{n+1}=r^{-(n+1)}\mc{P}(D[:Q^{n+1}])=\bigcup_{i=0}^{Q^{n+1}-1}r^{-(n+1)}\ell(D(i),D(i+1)),$$
    so there exists some integer $0\leq m<Q^{n+1}$ such that $x\in r^{-(n+1)}\ell(D(m),D(m+1))$. Because $0\leq m<Q^{n+1}$, we can use the division algorithm to find integers $0\leq m_1<Q^n$ and $0\leq m_2<Q$ such that $m=Qm_1+m_2$. Since $x\in r^{-(n+1)}\ell(D(m),D(m+1))$, we have
    $$r^{n+1}x\in\ell(D(m),D(m+1))=\ell(D(Qm_1+m_2),D(Qm_1+m_2+1)).$$
    Then because the distance between consecutive points of $D$ is 1,
    $$d(r^{n+1}x,D(Qm_1+m_2))\leq 1.$$
    Because $m_2$ is an integer less than $Q$, by the triangle inequality, we have
    $$d(D(Qm_1+m_2),D(Qm_1))\leq(Q-1).$$
    Adding this to the previous inequality and using the triangle inequality again gives
    \begin{align*}
        d(r^{n+1}x,D(Qm_1))&\leq d(r^{n+1}x,D(Qm_1+m_2))+d(D(Qm_1+m_2),D(Qm_1))\\
        &\leq 1+(Q-1)\\
        &=Q.
    \end{align*}
    Taking this result and multiplying by $r^{-(n+1)}$ tells us that
    $$d(x,r^{-(n+1)}D(Qm_1))\leq r^{-(n+1)}Q.$$
    Applying Proposition \ref{prop:D_self_similar} to the left side then gives
    $$d(x,r^{-n}D(m_1))\leq r^{-(n+1)}Q.$$
    Since $0\leq m_1<Q^n$, we have 
    $$r^{-n}D(m_1)\in\ell(D(m_1),D(m_1+1))\subseteq S_n.$$
    So because $d(x,r^{-n}D(m_1))\leq r^{-(n+1)}Q$ and $r^{-n}D(m_1)\in S_n$, we have found that
    $$D(x,S_n)\leq r^{-(n+1)}Q.$$
    The choice of $x\in S_{n+1}$ was arbitrary, so we have
    $$\sup\{d(x,S_n):x\in S_{n+1}\}\leq r^{-(n+1)}Q.$$
    Recall that our previous argument established that
    $$\sup\{d(x,S_{n+1}):x\in S_n\}\leq r^{-n}.$$
    Because $|r|>1$ and $Q\geq1$, we have both $r^{-(n+1)}Q\leq r^{-n}Q$ and $r^{-n}\leq r^{-n}Q$, so the value $r^{-n}Q$ is an upper bound on both supremums. That is,
    $$d_H(S_n,S_{n+1})=\max\{\sup\{d(x,S_n):x\in S_{n+1}\},\sup\{d(x,S_{n+1}):x\in S_n\}\}\leq r^{-n}Q.$$
    Now that we have bounded the distance between consecutive terms of the sequence $(S_n)$ in terms of $n$, we will use this result to argue that $(S_n)$ is Cauchy. Let $n,m\in\mb{N}$ with $n<m$. Then using the triangle inequality and the upper bound we found on $d_H(S_n,S_{n+1})$, we have
    \begin{align*}
        d_H(S_n,S_m)&\leq\sum_{i=n}^{m-n-1}d_H(S_i,S_{i+1})\\
        &\leq\sum_{i=n}^{m-1}r^{-i}Q\\
        &=r^{-n}Q\sum_{i=0}^{m-n-1}r^{-i}\\
        &<r^{-n}Q\sum_{i=0}^\infty r^{-i}\\
        &=r^{-n}\frac{Q}{1-\frac{1}{r}}.
    \end{align*}
    Note that the infinite series above converges because $D$ is regular, meaning $|r|>1$. Thus, we have shown that for any $n,m\in\mb{N}$ with $n<m$,
    $$d_H(S_n,S_m)<r^{-n}\frac{Q}{1-\frac{1}{r}}.$$
    Because $|r|>1$, for any $\epsilon>0$, we can choose an $N\in\mb{N}$ such that $r^{-N}\frac{Q}{1-\frac{1}{r}}<\epsilon$. Then for any $m>n>N$, we will have
    $$d_H(S_n,S_m)<r^{-n}\frac{Q}{1-\frac{1}{r}}<r^{-N}\frac{Q}{1-\frac{1}{r}}<\epsilon.$$
    Therefore, $(S_n)$ is Cauchy. The Hausdorff metric is complete, so this implies that the sequence $(S_n)$ converges to some $K\in\mc{H}(\mb{C})$. To see that $K\neq\{0\}$, simply note that by Proposition \ref{prop:D_self_similar}, for any $n\in\mb{N}$,
    $$D(1)=r^{-n}D(Q^n)\in r^{-n}\ell(D(Q^n-1),D(Q^n))\subseteq S_n.$$
    We always have $D(1)=P_D((0,0))=1$, so $1\in S_n$ for all $n\in\mb{N}$. This implies that $d_H(S_n,\{0\})\geq1$, so the sequence $(S_n)$ cannot converge to $\{0\}$.
\end{proof}

In particular, the Dekking curve $D=D_{2,3,1}$ is regular, and it is in fact the classical approximation of the Koch curve using line segments. We can compute that $Q=2^{\varphi(3)}=4$ and that $r=P_D(\lambda((0,0)))=D(4)=3$. Thus, nontrivial limit curve $K$ of $D_{2,3,1}$ guaranteed by Theorem \ref{thm:dekking_convergence} is
$$\lim_{n\to\infty}3^{-n}\mc{P}(D[:4^n])=K,$$
which is the Koch fractal curve. Figure \ref{fig:dekking_convergence} shows several of the sets $3^{-n}\mc{P}(D[:4^n])$.

\begin{figure}[H]
    \centering
    \includegraphics[width=0.7\linewidth]{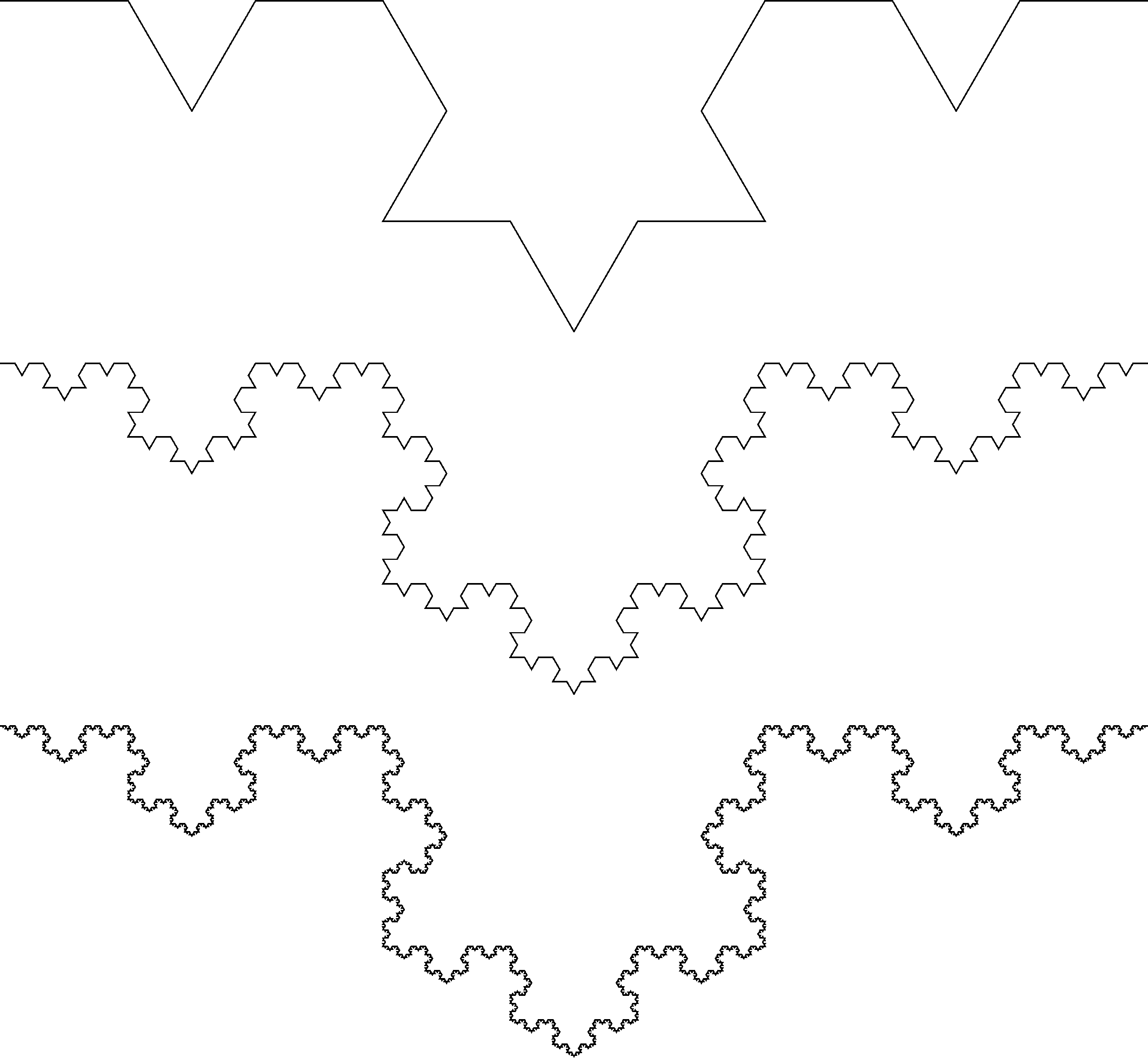}
    \caption{The sets $3^{-n}\mc{P}(D_{3,2,1}[:4^n])$ for $n=2,4,6$.}
    \label{fig:dekking_convergence}
\end{figure}

The next theorem is our main motivation for defining the similarity of turtle curves. It states that similar turtle curves have exactly the same limit curves.

\begin{theorem}\label{thm:sim_convergence}
    Let $T_1$ and $T_2$ be turtle curves with $T_1\sim T_2$. Then for any set $K\in\mc{H}(\mb{C})$, $K$ is a limit curve of $T_1$ if and only if $K$ is a limit curve of $T_2$.
\end{theorem}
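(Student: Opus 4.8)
The plan is to exploit the symmetry of $\sim$ to reduce to a single implication, and then to transfer a witnessing sequence for $T_1$ into one for $T_2$ by comparing both curves to a common ``coarse skeleton.'' Since $\sim$ is an equivalence relation it is symmetric, so it suffices to show that every limit curve $K$ of $T_1$ is also a limit curve of $T_2$. I would fix $r,c\in\mb{C}^\times$ with $|r|>1$ and a sequence $(b_n)$ witnessing that $K$ is a limit curve of $T_1$, together with positive integers $k_1,k_2$ and $a\in\mb{C}^\times$ such that $a\,T_1(k_1n)=T_2(k_2n)$ for all $n$.

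First I would record a uniform bound on segment lengths: for any turtle curve $T=(\sigma,\tau)$ one has $|T(n+1)-T(n)|=|\alpha_T(\sigma[:n])\,P_T(\sigma(n))|=|P_T(\sigma(n))|\le M_T$, where $M_T=\max_{x\in A}|\pi_1(\tau(x))|$, because $\alpha_T$ takes values in $\mb{S}^1$. Two consequences follow from the triangle inequality. First, truncating a path by a bounded number of steps changes it by a bounded Hausdorff distance: $d_H(\mc{P}(T[:N_1]),\mc{P}(T[:N_2]))\le (N_2-N_1)M_T$ whenever $N_1\le N_2$. Second, the fine path $\mc{P}(T[:km+1])$ lies within Hausdorff distance $kM_T$ of its coarse subsampled path $\bigcup_{j=0}^{m-1}\ell(T(kj),T(k(j+1)))$, since every fine vertex is within $kM_T$ of the nearest coarse vertex and conversely.

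The crux is the observation that the two coarse skeletons coincide up to the scalar $a$: because $\ell(ax,ay)=a\,\ell(x,y)$ and $a\,T_1(k_1j)=T_2(k_2j)$, the coarse path of $T_2$ through the points $T_2(k_2j)$ equals exactly $a$ times the coarse path of $T_1$ through the points $T_1(k_1j)$. Combining this with the second consequence above for $T_1$ (with $k=k_1$) and for $T_2$ (with $k=k_2$), and using $d_H(aX,aY)=|a|\,d_H(X,Y)$, I obtain a constant $C=|a|k_1M_{T_1}+k_2M_{T_2}$, independent of $m$, with
$$d_H\!\left(a\,\mc{P}(T_1[:k_1m+1]),\,\mc{P}(T_2[:k_2m+1])\right)\le C.$$

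Finally I would transfer the limit. For $K\neq\{0\}$ the witnessing sequence must satisfy $b_n\to\infty$ (otherwise some subsequence of $(b_n)$ is bounded, along which $r^{-n}\mc{P}(T_1[:b_n])\to\{0\}$, forcing $cK=\{0\}$), so for large $n$ I set $m_n=\lfloor (b_n-1)/k_1\rfloor\ge1$ and $b_n'=k_2m_n+1\ge2$. By the first consequence, $r^{-n}\mc{P}(T_1[:b_n])$ and $r^{-n}\mc{P}(T_1[:k_1m_n+1])$ differ by at most $|r|^{-n}k_1M_{T_1}\to0$, so the latter also converges to $cK$, and scaling by $a$ gives $r^{-n}\,a\,\mc{P}(T_1[:k_1m_n+1])\to acK$. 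The displayed bound, scaled by $r^{-n}$, shows that $r^{-n}\mc{P}(T_2[:b_n'])$ stays within $|r|^{-n}C\to0$ of $r^{-n}\,a\,\mc{P}(T_1[:k_1m_n+1])$, whence $r^{-n}\mc{P}(T_2[:b_n'])\to acK$. As $ac\in\mb{C}^\times$ and $|r|>1$, this exhibits $K$ as a limit curve of $T_2$. The degenerate case $K=\{0\}$ is handled separately using the remark that $\{0\}$ is a limit curve of $T$ exactly when $\mc{P}(T)$ is bounded, a property preserved under $\sim$ since intermediate points lie within $k_iM_{T_i}$ of the subsampled ones. The main obstacle is this index bookkeeping---aligning the arbitrary sequence $(b_n)$ with multiples of $k_1$ and controlling the resulting truncation and edge-case errors---whereas the geometric heart, that the coarse skeletons agree up to $a$ and all remaining discrepancies are bounded hence annihilated by $r^{-n}$, is straightforward.
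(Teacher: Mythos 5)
Your proposal is correct and follows essentially the same route as the paper's proof: reduce to one implication by symmetry, align the index sequence $(b_n)$ with multiples of $k_1$ via the division algorithm, bound the Hausdorff distance between the suitably truncated paths of $c\cdot T_1$ and $T_2$ by a constant (built from the maximum segment lengths and $k_1,k_2$) independent of $n$, and let the factor $r^{-n}$ annihilate that constant. The only differences are organizational---you route the comparison through an explicit coarse subsampled path and handle the $K=\{0\}$ and small-$b_n$ edge cases, which the paper elides---but the underlying argument is the same.
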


\begin{proof}
    Because $\sim$ is a symmetric relation, it suffices to show that if $K\in\mc{H}(\mb{C})$ is a limit curve of $T_1$, it must be a limit curve of $T_2$. So assume that $K$ is a limit curve of $T_1$. Then there exist $r,c_1\in\mb{C}^\times$ with $|r|>1$ and a sequence of natural numbers $(b_n)$ such that
    $$\lim_{n\to\infty}r^{-n}\mc{P}(T_1[:b_n])=K.$$
    As $T_1\sim T_2$, there exist positive integers $k_1,k_2$ and $c_2\in\mb{C}^\times$ such that $$c_2\cdot T_1(k_1n)=T_2(k_2n)$$
    for all $n\in\mb{N}$. Using the division algorithm, for each $n\in\mb{N}$, write $b_n=k_1\tilde{b}_n+j_n$, where $\tilde{b}_n$ and $j_n$ are integers with $0\leq j_n<k_1$. Define the sequence $(a_n)$ by letting $a_n=k_2\tilde{b}_n$ for any $n\in\mb{N}$.

    The turtle curve $T_1$ is defined using some alphabet $A_1$, which is finite, so we can define $L_1=\max\{|P_{T_1}(a)|:a\in A_1\}$. Similarly, let $L_2=\max\{|P_{T_2}(a)|:a\in A_2\}$, where $A_2$ is the alphabet used by the turtle curve $T_2$. The constants $L_1$ and $L_2$ give a maximum distance between consecutive points of $T_1$ and consecutive points of $T_2$ respectively. We claim that for any $n\in\mb{N}$, there is an upper bound
    $$d_H(c_2\mc{P}(T_1[:b_n]),\mc{P}(T_2[:a_n]))\leq\max\{c_2k_1L_1,k_2L_2\}.$$
    To see why, consider an arbitrary $x\in c_2\mc{P}(T_1[:b_n])$. Then
    $$x\in c_2\mc{P}(T_1[:b_n])=\bigcup_{i=0}^{b_n-1}c_2\ell(T_1(i),T_1(i+1)),$$
    so there exists some $0\leq m<b_n$ such that $x\in c_2\ell(T_1(m),T_1(m+1))$. Dividing by $c_2$, we have $c_2^{-1}x\in\ell(T_1(m),T_1(m+1))$. Then because $L_1$ is an upper bound on the distance between consecutive points of $T_1$, we have $d(c_2^{-1}x,T_1(m))\leq L_1$. Using the division algorithm, write $m=k_1m_1+m_2$ for some integers $m_1$ and $0\leq m_2<k_1$. Because $m_2<k_1$, using the triangle inequality, we have 
    $$d(T_1(k_1m_1+m_2),T_1(k_1m_1))\leq m_2L_1\leq(k_1-1)L_1.$$
    Using our previous finding that $d(c_2^{-1}x,T_1(m))\leq L_1$ along with the triangle inequality then gives
    \begin{align*}
        d(c_2^{-1}x,T_1(k_1m_1))&\leq d(c_2^{-1}x,T_1(m))+d(T_1(k_1m_1+m_2),T_1(k_1m_1))\\
        &\leq L_1+(k_1-1)L_1\\
        &=k_1L_1.
    \end{align*}
    Taking this result and multiplying by $c_2$ then gives
    $$d(x,c_2T_1(k_1m_1))\leq c_2k_1L_1.$$
    Because $T_1\sim T_2$, we have $c_2T_1(k_1m_1)=T_2(k_2m_1)$. Recall that we used the division algorithm on both $m$ and $b_n$ with divisor $k_1$, giving $m=k_1m_1+m_2$ and $b_n=k_1\tilde{b}_n+j_n$. Because $m<b_n$, we must have $m_1\leq\tilde{b}_n$. Multiplying by $k_2$, we have $k_2m_1\leq k_2\tilde{b}_n=a_n$. Thus,  $T_2(k_2m_1)\in\mc{P}(T_2[:a_n])$. We had 
    $$d(x,T_2(k_2m_1))=d(x,c_2T_1(k_1m_1))\leq c_2k_1L_1,$$
    so it follows that
    $$d(x,\mc{P}(T_2[:a_n]))\leq c_2k_1L_1.$$
    The choice of $x\in c_2\mc{P}(T_1[:b_n])$ was arbitrary, so
    $$\sup\{d(x,\mc{P}(T_2[:a_n])):x\in c_2\mc{P}(T_1[:b_n])\}\leq c_2k_1L_1.$$
    We can make a similar argument in the other direction. Let $x\in\mc{P}(T_2[:a_n])$. Then
    $$x\in \mc{P}(T_2[:a_n])=\bigcup_{i=0}^{a_n-1}\ell(T_2(i),T_2(i+1)),$$
    so there exists $0\leq m<a_n$ such that $x\in\ell(T_2(m),T_2(m+1))$. Similar to before, $L_2$ is the maximum distance between points of $T_2$, so $d(x,T_2(m))\leq L_2$. We can use the division algorithm to write $m=k_2m_1+m_2$ for some integers $m_1,m_2$ with $0\leq m_2<k_2$. Because $m_2<k_2$, we have $d(T_2(m),T_2(k_2m_1))\leq (k_2-1)L_2$. Then using the triangle inequality, we have
    \begin{align*}
        d(x,T_2(k_2m_1))&\leq d(x,T_2(m))+d(T_2(m),T_1(k_2m_1))\\
        &\leq L_2+(k_2-1)L_2\\
        &=k_2L_2.
    \end{align*}
    Because $T_1\sim T_2$, $T_2(k_2m_1)=c_2T_1(k_1m_1)$. Because $m<a_n$, we have
    $$k_1k_2m_1\leq k_1m<k_1a_n=k_1k_2\tilde{b}_n\leq k_2b_n.$$
    Taking out the factor of $k_2$, we see that $k_1m<b_n$. Thus, 
    $$T_2(k_2m_1)=c_2T_1(k_1m_1)\in\mc{P}(T_1[:b_n]).$$
    This together with our previous finding that $d(x,T_2(k_2m_1))\leq k_2L_2$ tells us that
    $$\sup\{d(x,\mc{P}(T_1[:b_n])):x\in \mc{P}(T_2[:a_n])\}\leq k_2L_2.$$
    In the other direction we found that
    $$\sup\{d(x,\mc{P}(T_2[:a_n])):x\in c_2\mc{P}(T_1[:b_n])\}\leq c_2k_1L_1,$$
    so we have shown
    $$d_H(c_2\mc{P}(T_1[:b_n]),\mc{P}(T_2[:a_n]))\leq\max\{c_2k_1L_1,k_2L_2\}.$$
    Then for any $n\in\mb{N}$, we have
    $$d_H(r^{-n}c_2\mc{P}(T_1[:b_n]),r^{-n}\mc{P}(T_2[:a_n]))\leq |r|^{-n}\max\{c_2k_1L_1,k_2L_2\}.$$
    Because $|r|>1$, taking the limit as $n\to\infty$,
    $$\lim_{n\to\infty}d_H(r^{-n}c_2\mc{P}(T_1[:b_n]),r^{-n}\mc{P}(T_2[:a_n]))=0.$$
    Therefore, in the Hausdorff metric,
    $$\lim_{n\to\infty}r^{-n}\mc{P}(T_2[:a_n])=\lim_{n\to\infty}r^{-n}c_2\mc{P}(T_1[:b_n])=c_2K.$$
    Then by definition, $K$ is a limit curve of $T_2$.

\end{proof}

\subsection{Similarity Between Thue-Morse Curves and Dekking Curves}

As a consequence of Theorems \ref{thm:dekking_convergence} and \ref{thm:sim_convergence}, any turtle curve which is similar to a regular Dekking curve $D$ will share a nontrivial limit curve with $D$. This means that if we can show a Thue-Morse turtle curve is similar to a regular Dekking curve, we will have shown that it converges to the same limit curve. Theorem \ref{thm:T_to_B} is the first step to proving this. It shows that any Thue-Morse turtle curve is similar to an absolute turtle curve which uses a Dekking sequence as the source of its instructions.

\begin{theorem}\label{thm:T_to_B}
    Let $T=(t_p,\tau)$ be a Thue-Morse turtle curve such that $\alpha_T(\phi(0))=\zeta_q^k$, where $\phi$ is the morphism used to define $t_p$, and $k$ and $q\geq2$ are coprime. Let $B=(z_{p,q},\kappa)$ be an absolute turtle curve, where $\kappa$ is an interpreter function with $\kappa((x,y))=P_T(\phi(x))\zeta_q^{ky}$ for all $(x,y)\in A_{p,q}$. Then $T\sim B$.
\end{theorem}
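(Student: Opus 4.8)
The plan is to prove the stronger statement that $T(pn) = B(n)$ for every $n \in \mb{N}$; this immediately gives $T \sim B$ by taking $k_1 = p$, $k_2 = 1$, and $c = 1$ in the definition of similarity. The entry point is the morphism structure of $t_p$. Since $t_p = \phi^\omega(0)$ is the fixed point of the $p$-uniform morphism $\phi$, the general fact recorded in the proof of Lemma \ref{lem:morphism_is_recurrence} gives $t_p[:pn] = \phi(t_p[:n])$. Hence
$$T(pn) = P_T(t_p[:pn]) = P_T(\phi(t_p[:n])),$$
and everything reduces to evaluating the turtle's position after executing the $n$ consecutive blocks $\phi(t_p(0)), \dots, \phi(t_p(n-1))$.

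Next I would apply Lemma \ref{lem:expand_S} to these blocks and project onto the first coordinate with $\pi_1$, which yields
$$T(pn) = \sum_{i=0}^{n-1} \alpha_T(\phi(t_p[:i]))\, P_T(\phi(t_p(i))).$$
The heart of the argument is identifying the accumulated rotation $\alpha_T(\phi(t_p[:i]))$. Since $\alpha_T$ is a homomorphism (Proposition \ref{prop:alpha_is_morphism}), it factors as $\prod_{j=0}^{i-1}\alpha_T(\phi(t_p(j)))$, so it suffices to compute $\alpha_T(\phi(a))$ for a single symbol $a \in A_p$. I expect this to be the main obstacle, and the key observation resolving it is that $\phi(a) = \concat_{j=0}^{p-1}(a + \overline{j})$ runs through every residue of $\mb{Z}/p\mb{Z}$ exactly once. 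Using the homomorphism property again,
$$\alpha_T(\phi(a)) = \prod_{j=0}^{p-1}\pi_2(\tau(a + \overline{j})) = \prod_{b \in \mb{Z}/p\mb{Z}}\pi_2(\tau(b)),$$
because $j \mapsto a + \overline{j}$ permutes $\mb{Z}/p\mb{Z}$. Thus $\alpha_T(\phi(a))$ does not depend on $a$, and the hypothesis $\alpha_T(\phi(0)) = \zeta_q^k$ forces $\alpha_T(\phi(a)) = \zeta_q^k$ for all $a$. Consequently $\alpha_T(\phi(t_p[:i])) = \zeta_q^{ki}$, giving
$$T(pn) = \sum_{i=0}^{n-1}\zeta_q^{ki}\,P_T(\phi(t_p(i))).$$

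Finally I would expand $B(n)$ and match term by term. As $B$ is an absolute turtle curve, $P_B$ is additive, so
$$B(n) = \sum_{i=0}^{n-1}\kappa(z_{p,q}(i)) = \sum_{i=0}^{n-1}P_T(\phi(t_p(i)))\,\zeta_q^{k f_q(i)},$$
using $z_{p,q}(i) = (t_p(i), f_q(i))$ and the definition of $\kappa$. The two expressions coincide once one notes $\zeta_q^{ki} = \zeta_q^{k f_q(i)}$, which holds because $\zeta_q^{q} = 1$ makes $\zeta_q^{ki}$ depend only on $i \bmod q = f_q(i)$. The coprimality of $k$ and $q$ is not needed for this identity; it is imposed only to keep $B$ consistent with the Dekking curve conventions used later. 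This establishes $T(pn) = B(n)$ and hence $T \sim B$. Once the single-symbol computation $\alpha_T(\phi(a)) = \zeta_q^k$ is secured, the remainder is routine bookkeeping with the homomorphism properties already in hand.
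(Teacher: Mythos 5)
Your proposal is correct and follows essentially the same route as the paper: prove the stronger identity $T(pn)=B(n)$ via $t_p[:pn]=\phi(t_p[:n])$, expand with Lemma \ref{lem:expand_S}, and use the fact that $\alpha_T(\phi(a))$ is independent of $a$ (because $\phi(a)$ permutes the symbols of $\phi(0)$) to collapse the accumulated rotation to $\zeta_q^{ki}$. Your write-up is slightly more explicit than the paper's in justifying that independence and in noting $\zeta_q^{ki}=\zeta_q^{kf_q(i)}$, but the argument is the same.
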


\begin{proof}
    We claim that $T(pn)=B(n)$ for all $n\in\mb{N}$. We start by expanding $B(n)$.
    \begin{align*}
        B(n)&=\sum_{i=0}^{n-1}\kappa(z_{p,q}(i))\\
        &=\sum_{i=0}^{n-1}\kappa((t_p(i),f_q(i)))\\
        &=\sum_{i=0}^{n-1}P_T(\phi(t_p(i)))\zeta_q^{ki}
    \end{align*}
    Now consider $T(pn)$. Using the fact that $t_p$ is a fixed point of the $p$-uniform morphism $\phi$, we have
    \begin{align*}
        T(pn)&=P_T(t_p[:pn])\\
        &=P_T(\phi(t_p[:n]))\\
        &=P_T\left(\phi\left(\concat_{i=0}^{n-1}t_p(i)\right)\right)\\
        &=P_T\left(\concat_{i=0}^{n-1}\phi(t_p(i))\right)
    \end{align*}
    This expression can then be evaluated using Lemma \ref{lem:expand_S}, as $P_T$ is simply $\pi_1\circ S_T$.
    $$T(pn)=P_T\left(\concat_{i=0}^{n-1}\phi(t_p(i))\right)=\sum_{i=0}^{n-1}\alpha_T\left(\concat_{j=0}^{i-1}\phi(t_p(j))\right)P_T(\phi(t_p(i)))$$
    Recall that by Proposition \ref{prop:alpha_is_morphism}, $\alpha_T$ is a homomorphism from $(A_p,||)$ to $(\mb{S}^1,\cdot)$, so we have
    $$T(pn)=\sum_{i=0}^{n-1}\alpha_T\left(\concat_{j=0}^{i-1}\phi(t_p(j))\right)P_T(\phi(t_p(i)))=\sum_{i=0}^{n-1}\left[\prod_{j=0}^{i-1}\alpha_T(\phi(t_p(j)))\right]P_T(\phi(t_p(i)))$$
    Our initial assumption was that $\alpha_T(\phi(0))=\zeta_q^k$. But this actually implies that $\alpha_T(\phi(a))=\zeta_q^k$ for any $a\in A_p$, as $\phi(a)$ is just a reordering of the symbols of $\phi(0)$. Thus,
    \begin{align*}
        T(pn)&=\sum_{i=0}^{n-1}\left[\prod_{j=0}^{i-1}\alpha_T(\phi(t_p(j)))\right]P_T(\phi(t_p(i)))\\
        &=\sum_{i=0}^{n-1}\left[\prod_{j=0}^{i-1}\zeta_q^k\right]P_T(\phi(t_p(i)))\\
        &=\sum_{i=0}^{n-1}\zeta_q^{ki}P_T(\phi(t_p(i)))
    \end{align*}
    Observe that this is the exact same sum we obtained by evaluating $B(n)$, so we have $T(pn)=B(n)$ for all $n\in\mb{N}$. Then by definition, $T\sim B$.
\end{proof}

The next theorem provides a way to relate the turtle curve $B$ from Theorem \ref{thm:T_to_B} to a Dekking curve. Because the proof allows it, we will work with a more general $B$ than the $B$ from Theorem \ref{thm:T_to_B}, taking arbitrary complex numbers $c_0,c_1\in\mathbb{C}$ rather than specifically $P_T(\phi(0))$ and $P_T(\phi(1))$.

\begin{theorem}\label{thm:B_to_D}
    Let $k$ and $q$ be integers with $q\geq2$. Let $c_0,c_1\in\mathbb{C}$ with $c_0\neq c_1$. Let $B=(z_{p,q},\kappa)$, where $\kappa$ is an interpreter function defined by $\kappa((x,y))=c_x\zeta_q^{ky}$. Then $B\sim D_{2,q,k}$.
\end{theorem}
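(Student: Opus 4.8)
The plan is to write both $B$ and $D_{2,q,k}$ as explicit complex sums over the same Dekking sequence and then to isolate their difference as a geometric sum that vanishes periodically. Here the relevant alphabet is $A_{2,q}=\mb{Z}/2\mb{Z}\times\mb{Z}/q\mb{Z}$, so the first coordinate of $z_{2,q}(i)$ is $t_2(i)\in\{0,1\}$, and since $\zeta_q^{ky}$ depends only on $y=\overline{i}$, expanding the two turtle curve definitions (and using $\zeta_2^x=(-1)^x$) gives
$$B(N)=\sum_{i=0}^{N-1}c_{t_2(i)}\zeta_q^{ki}\qquad\text{and}\qquad D_{2,q,k}(N)=\sum_{i=0}^{N-1}(-1)^{t_2(i)}\zeta_q^{ki}.$$

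The key step is a linear decomposition of the coefficient $c_{t_2(i)}$. Since $t_2(i)\in\{0,1\}$, I would set $a=\tfrac{c_0+c_1}{2}$ and $b=\tfrac{c_0-c_1}{2}$, so that the affine identity $c_x=a+b(-1)^x$ holds for both $x=0$ and $x=1$; crucially $b\neq0$ because $c_0\neq c_1$. Substituting this identity into the sum for $B(N)$ and splitting off the two pieces yields
$$B(N)=a\sum_{i=0}^{N-1}\zeta_q^{ki}+b\sum_{i=0}^{N-1}(-1)^{t_2(i)}\zeta_q^{ki}=a\sum_{i=0}^{N-1}\zeta_q^{ki}+b\,D_{2,q,k}(N).$$
Thus $B$ differs from the constant multiple $b\,D_{2,q,k}$ only by the geometric sum $G(N)=\sum_{i=0}^{N-1}\zeta_q^{ki}$.

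Finally I would exploit the periodicity of $G$. Because $D_{2,q,k}$ is a Dekking curve we have $\gcd(k,q)=1$, so $\zeta_q^{k}$ is a $q$-th root of unity different from $1$, and hence $G(qn)=\frac{1-\zeta_q^{kqn}}{1-\zeta_q^{k}}=0$ for every $n\in\mb{N}$. Evaluating the displayed identity at $N=qn$ therefore gives $B(qn)=b\,D_{2,q,k}(qn)$, and multiplying through by $b^{-1}$ (which exists since $b\neq0$) gives $b^{-1}B(qn)=D_{2,q,k}(qn)$ for all $n$. Taking $c=b^{-1}\in\mb{C}^\times$ together with $k_1=k_2=q$ in the definition of similarity then shows $B\sim D_{2,q,k}$. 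The only genuine subtlety is spotting the decomposition $c_x=a+b(-1)^x$ and recognizing that the leftover constant part produces precisely a geometric sum in $\zeta_q^{k}$, which must be killed at multiples of $q$; the remaining manipulations are routine. This decomposition is also what pins the argument to two symbols, since for $p>2$ the coefficient $c_x$ could not in general be written as such an affine correction of the single root of unity $(-1)^x$ defining $\tau$.
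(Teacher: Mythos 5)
Your proof is correct and is essentially the paper's argument in different clothing: both isolate the part of $B$ proportional to $c_0-c_1$ and annihilate the remainder via the vanishing of the geometric sum $\sum\zeta_q^{ki}$ over a full period (which, as you rightly note, needs $\gcd(k,q)=1$ so that $\zeta_q^k\neq1$ --- a hypothesis the paper also uses, silently), arriving at the same constant $\tfrac{2}{c_0-c_1}$ and the same coincidence $\tfrac{2}{c_0-c_1}B(qn)=D_{2,q,k}(qn)$. The only organizational difference is that the paper regroups the sum by counting occurrences $|z_{2,q}[:qn]|_{(x,y)}$ of each symbol and substitutes $|z_{2,q}[:qn]|_{(1,y)}=n-|z_{2,q}[:qn]|_{(0,y)}$, then specializes $c_0=1,\,c_1=-1$ to recover $D_{2,q,k}(qn)$, whereas your termwise affine decomposition $c_x=\tfrac{c_0+c_1}{2}+\tfrac{c_0-c_1}{2}(-1)^x$ yields the identity $B(N)=aG(N)+b\,D_{2,q,k}(N)$ for every $N$, producing $D_{2,q,k}$ directly as a summand --- a marginally cleaner bookkeeping of the same idea.
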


Note that this theorem works only with $p=2$, which unfortunately means that our main results will be restricted to the case where $p=2$ as well.

\begin{proof}

We will show $B\sim D_{2,q,k}$ by showing that $\frac{2}{c_0-c_1}\cdot B(qn)=D_{2,q,k}(qn)$ for all $n\in\mathbb{N}$. First observe that the curve $D_{2,q,k}$ is simply $B$ in the case where $c_0=1$ and $c_1=-1$. Thus, we can calculate the general form of $B(qn)$ to better understand both curves.
\begin{align*}
    B(qn)&=\sum_{j=0}^{qn-1}P_B(z_{2,q}(j))\\
    &=\sum_{(x,y)\in A_{2,q}}\sum_{\substack{0\leq j<qn \\ z_{2,q}(j)=(x,y)}}P_B(z_{2,q}(j))\\
    &=\sum_{(x,y)\in A_{2,q}}|z_{2,q}[:qn]|_{(x,y)}P_B((x,y))\\
    &=\sum_{y=0}^{q-1}\left(|z_{2,q}[:qn]|_{(0,y)}P_B((0,y))+|z_{2,q}[:qn]|_{(1,y)}P_B((1,y))\right)\\
    &=\sum_{y=0}^{q-1}\left(|z_{2,q}[:qn]|_{(0,y)}c_0\zeta_q^{ky}+|z_{2,q}[:qn]|_{(1,y)}c_1\zeta_q^{ky}\right)\\
    &=\sum_{y=0}^{q-1}\left(\zeta_q^{ky}(|z_{2,q}[:qn]|_{(0,y)}c_0+|z_{2,q}[:qn]|_{(1,y)}c_1)\right)
\end{align*}

For any $0\leq y<q$, it is not hard to see that $|f_q[:qn]|_y=n$. We then have 

$$|z_{2,q}[:qn]|_{(0,y)}+|z_{2,q}[:qn]|_{(1,y)}=|f_q[:qn]|_y=n.$$

Thus, in our calculation above, we can substitute $|z_{2,q}[:qn]|_{(1,y)}$ with $n-|z_{2,q}[:qn]|_{(0,y)}$.
\begin{align*}
    B(qn)&=\sum_{y=0}^{q-1}\zeta_q^{ky}(|z_{2,q}[:qn]|_{(y,0)}c_0+(n-|z_{2,q}[:qn]|_{(0,y)})c_1)\\
    &=\sum_{y=0}^{q-1}\zeta_q^{ky}(|z_{2,q}[:qn]|_{(y,0)}c_0+nc_1-|z_{2,q}[:qn]|_{(0,y)}c_1)\\
    &=\sum_{y=0}^{q-1}\zeta_q^{ky}(|z_{2,q}[:qn]|_{(0,y)}(c_0-c_1)+nc_1)\\
    &=(c_0-c_1)\sum_{y=0}^{q-1}\zeta_q^{ky}|z_{2,q}[:qn]|_{(0,y)}+nc_1\sum_{y=0}^{q-1}\zeta_q^{ky}\\
    &=(c_0-c_1)\sum_{y=0}^{q-1}\zeta_q^{ky}|z_{2,q}[:qn]|_{(0,y)}+nc_1(0)\\
    &=(c_0-c_1)\sum_{y=0}^{q-1}\zeta_q^{ky}|z_{2,q}[:qn]|_{(0,y)}
\end{align*}

The second to last equality is due to the fact that $\zeta_q^k$ is a $q$th root of unity. As mentioned earlier, we can now take $c_0=1$ and $c_1=-1$ to obtain $D_{2,q,k}(qn)$.
\begin{align*}
    D_{2,q,k}(qn)&=(1-(-1))\sum_{y=0}^{q-1}\zeta_q^{ky}|z_{2,q}[:qn]|_{(0,y)}\\
    &=2\cdot\sum_{y=0}^{q-1}\zeta_q^{ky}|z_{2,q}[:qn]|_{(0,y)}
\end{align*}

With this, we can now obtain the desired result.
\begin{align*}
    \frac{2}{c_0-c_1}\cdot B(qn)&=\frac{2}{c_0-c_1}\cdot(c_0-c_1)\sum_{y=0}^{q-1}\zeta_q^{ky}|z_{2,q}[:qn]|_{(0,y)}\\
    &=2\cdot\sum_{y=0}^{q-1}\zeta_q^{ky}|z_{2,q}[:qn]|_{(0,y)}\\
    &=D_{2,q,k}(qn).
\end{align*}
    
\end{proof}

Because $\sim$ is a transitive relation, Theorems \ref{thm:T_to_B} and \ref{thm:B_to_D} together give us the ability to relate a Thue-Morse turtle curve $T=(t_2,\tau)$ with $\alpha_T(\phi(0))=\zeta_q^k$ to the Dekking curve $D_{2,q,k}$. When we consider the implications of this for the convergence of $T$, an issue that we will encounter is that $D_{2,q,k}$ is not necessarily a regular Dekking curve, so we will not be able to apply Theorem \ref{thm:dekking_convergence}. For example, if $\alpha_T(\phi(0))=\zeta_6$, as with the curve shown in Figure \ref{fig:koch_convergence}, we see experimentally that $T$ still appears to converge to the Koch curve. However, $q=6$ and $p=2$ are not coprime, so $D_{2,6,1}$ is not regular. The following theorem gives us a way to work around this issue, as it implies that $D_{2,6,1}\sim D_{2,3,1}$, which is regular.

\begin{theorem}\label{thm:D_to_D}
    Let $R=D_{p,q,k_1}$ be a regular Dekking curve. Let $D=D_{p,qp^b,k_2}$ be a Dekking curve, where $b\in\mb{N}$ and $\gcd(qp^b,k_2)=1$. If there exists $d\in\mathbb{N}$ such that $k_1\equiv p^dk_2\pmod{q}$, then $R\sim D$.
\end{theorem}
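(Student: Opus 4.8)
The plan is to produce, in a single stroke, the periodic coincidence demanded by the definition of $\sim$ by sampling $D$ at multiples of $p^{b+d}$. Concretely, I would prove the identity
$$D(p^{b+d}n)=D(p^{b+d})\cdot R(n)\qquad\text{for all }n\in\mb{N},$$
and then separately argue that the scalar $c:=D(p^{b+d})$ is nonzero. Granting both, the definition of similarity is met with sampling indices $1$ and $p^{b+d}$ and constant $c\in\mb{C}^\times$, giving $R\sim D$ at once. Note that $R$ need not appear on both sides as a product with its scaling factor; the whole content is this one exact relation, so no Cauchy/limit argument is required here (regularity of $R$ enters only through $\gcd(p,q)=1$).

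To establish the identity, recall $D(M)=\sum_{i=0}^{M-1}\zeta_p^{t_p(i)}\zeta_{qp^b}^{k_2 i}$. I would split the summation index of $D(p^{b+d}n)$ as $i=p^{b+d}m+s$ with $0\le m<n$ and $0\le s<p^{b+d}$. Lemma \ref{lem:p^k_reduction} gives $t_p(p^{b+d}m+s)=t_p(m)+t_p(s)$, so $\zeta_p^{t_p(i)}=\zeta_p^{t_p(m)}\zeta_p^{t_p(s)}$. For the other factor, $\zeta_{qp^b}^{k_2 p^{b+d}m}=\zeta_q^{k_2 p^{d}m}=\zeta_q^{k_1 m}$, where the first equality cancels $p^b$ against the modulus $qp^b$ and the second uses the hypothesis $k_1\equiv p^dk_2\pmod q$. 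The double sum then factors as
$$D(p^{b+d}n)=\left(\sum_{m=0}^{n-1}\zeta_p^{t_p(m)}\zeta_q^{k_1 m}\right)\left(\sum_{s=0}^{p^{b+d}-1}\zeta_p^{t_p(s)}\zeta_{qp^b}^{k_2 s}\right)=R(n)\,D(p^{b+d}),$$
which is exactly the claim. (When $b+d=0$ this degenerates to $R=D$, since then $D=D_{p,q,k_2}$ and $k_1\equiv k_2\pmod q$.)

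The main obstacle is showing $c=D(p^{b+d})\ne 0$; this is where the coprimality hypotheses are indispensable. I would factor the partial sum through the base-$p$ expansion of the index: since $t_p(s)\equiv s_p(s)\pmod p$ (the base-$p$ digit sum), writing $s=\sum_j s_j p^j$ gives $\zeta_p^{t_p(s)}=\prod_j\zeta_p^{s_j}$ and $\zeta_{qp^b}^{k_2 s}=\prod_j(\zeta_{qp^b}^{k_2 p^j})^{s_j}$, so the sum splits into a product of geometric sums,
$$D(p^{b+d})=\prod_{j=0}^{b+d-1}\left(\sum_{s=0}^{p-1}\omega_j^{\,s}\right),\qquad \omega_j:=\zeta_p\,\zeta_{qp^b}^{k_2 p^j}.$$
A factor vanishes only when $\omega_j$ is a $p$-th root of unity different from $1$, so it suffices to check $\omega_j^{\,p}\ne 1$ for each $j$. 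Here $\omega_j^{\,p}=\zeta_{qp^b}^{k_2 p^{j+1}}$, which equals $1$ precisely when $qp^b\mid k_2 p^{j+1}$; since $\gcd(qp^b,k_2)=1$ this would force $qp^b\mid p^{j+1}$, hence $q\mid p^{j+1}$, which is impossible because $\gcd(p,q)=1$ (from regularity of $R$) and $q\ge 2$. Thus each factor is a nonzero ratio $(\omega_j^{\,p}-1)/(\omega_j-1)$, and $c\ne 0$.

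Combining the two parts yields $c\,R(n)=D(p^{b+d}n)$ with $c\in\mb{C}^\times$, which is precisely $R\sim D$. I expect the index-splitting identity to be routine once Lemma \ref{lem:p^k_reduction} and the modular hypothesis are in place; the real work is the nonvanishing of the sampling constant $D(p^{b+d})$, so I would present the geometric-sum factorization as the key computation, since it is the only step that uses the full force of $\gcd(p,q)=1$ and $\gcd(qp^b,k_2)=1$.
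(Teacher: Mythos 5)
Your central identity $D(p^{b+d}n)=D(p^{b+d})\cdot R(n)$, and the way you derive it --- splitting the index as $i=p^{b+d}m+s$, applying Lemma \ref{lem:p^k_reduction} to get $\zeta_p^{t_p(i)}=\zeta_p^{t_p(m)}\zeta_p^{t_p(s)}$, cancelling $p^b$ against the modulus so that $\zeta_{qp^b}^{k_2p^{b+d}m}=\zeta_q^{k_2p^dm}$, and then invoking $k_1\equiv p^dk_2\pmod q$ to factor the double sum --- is exactly the computation in the paper's proof, so on that part you and the paper coincide. Where you genuinely go further is the nonvanishing of the sampling constant $c=D(p^{b+d})$: the definition of $\sim$ requires $c\in\mb{C}^\times$, and the paper's proof passes from the identity directly to ``$R\sim D$'' without verifying $D(p^{b+d})\neq0$. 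Your argument for this is correct: the base-$p$ digit expansion factors the partial sum as $\prod_{j=0}^{b+d-1}\sum_{s=0}^{p-1}\omega_j^{\,s}$ with $\omega_j=\zeta_p\zeta_{qp^b}^{k_2p^j}$, a factor can vanish only if $\omega_j^{\,p}=\zeta_{qp^b}^{k_2p^{j+1}}=1$, and $\gcd(qp^b,k_2)=1$ together with $\gcd(p,q)=1$ (which is built into the definition of a regular Dekking curve, the only way regularity is used here) rules that out. So your proposal not only matches the paper's route but closes a small gap in it; the only points worth flagging are cosmetic --- the degenerate case $b+d=0$, where Lemma \ref{lem:p^k_reduction} does not literally apply but the claim reduces to $R=D$ as you note, and the fact that $t_p(s)\equiv s_p(s)\pmod p$, which the paper records only in a footnote and which you should cite or reprove if you want the digit-wise factorization to be self-contained.
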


\begin{proof}

We claim that $D(p^d)R(n)=D(p^{d+b}n)$ for all $n\in\mb{N}$. We first calculate $D(p^d)R(n)$.
\begin{align*}
    D(p^{d+b})R(n)&=P_{D}(z_{p,qp^b}[:p^{d+b}])P_{R}(z_{p,q}[:n])\\
    &=\left(\sum_{r=0}^{p^{d+b}-1}\zeta_p^{t_p(r)}\zeta_{qp^b}^{k_2r}\right)\left(\sum_{h=0}^{n-1}\zeta_p^{t_p(h)}\zeta_q^{k_1h}\right)
\end{align*}

Now we will show that $D(p^{d+b}n)$ equates to the same expression. Expanding $D(p^{d+b}n)$, we find
\begin{align*}
    D(p^{d+b}n)&=P_{D}(z_{p,qp^b}[:p^{d+b}n])\\
    &=\sum_{j=0}^{p^{d+b}n-1}P_{D}(z_{p,qp^b}(j))\\
    &=\sum_{j=0}^{p^{d+b}n-1}\zeta_p^{t_p(j)}\zeta_{qp^b}^{k_2j}.
\end{align*}

We can re-index the sum by letting $j=p^{d+b}h+r$, with $0\leq h<n$ and $0\leq r<p^{d+b}$. Then, we apply Lemma \ref{lem:p^k_reduction} to $t_p(p^{d+b}h+r)$.
\begin{align*}
    D(p^{d+b}n)&=\sum_{r=0}^{p^{d+b}-1}\sum_{h=0}^{n-1}\zeta_p^{t_p(p^{d+b}h+r)}\zeta_{qp^b}^{p^{d+b}h+r}\\
    &=\sum_{r=0}^{p^{d+b}-1}\sum_{h=0}^{n-1}\zeta_p^{t_p(h)+t_p(r)}\zeta_{qp^b}^{p^{d+b}h+r}\\
    &=\sum_{r=0}^{p^{d+b}-1}\sum_{h=0}^{n-1}\zeta_p^{t_p(r)}\zeta_{qp^b}^{k_2r}\zeta_p^{t_p(h)}\zeta_{qp^b}^{p^{d+b}h}\\
    &=\left(\sum_{r=0}^{p^{d+b}-1}\zeta_p^{t_p(r)}\zeta_{qp^b}^{k_2r}\right)\left(\sum_{h=0}^{n-1}\zeta_p^{t_p(h)}\zeta_{qp^b}^{k_2p^{d+b}h}\right)\\
    &=\left(\sum_{r=0}^{p^{d+b}-1}\zeta_p^{t_p(r)}\zeta_{qp^b}^{k_2r}\right)\left(\sum_{h=0}^{n-1}\zeta_p^{t_p(h)}\zeta_q^{k_2p^dh}\right)\\
    &=\left(\sum_{r=0}^{p^{d+b}-1}\zeta_p^{t_p(r)}\zeta_{qp^b}^{k_2r}\right)\left(\sum_{h=0}^{n-1}\zeta_p^{t_p(h)}\zeta_q^{k_1h}\right)\\
    &=D(p^{d+b})R(n)
\end{align*}

The second to last equality is by our initial assumption that $k_1\equiv p^dk_2\pmod{q}$. So because $D(p^d)R(n)=D(p^{d+b}n)$ for all $n\in\mb{N}$, $R\sim D$.

\end{proof}

Now we are ready to put Theorems \ref{thm:T_to_B}, \ref{thm:B_to_D}, and \ref{thm:D_to_D} together to establish our main result.

\begin{theorem}\label{thm:main_result}
    Let $R=D_{2,q,k_1}$ be a regular Dekking curve with scaling factor $r$ which has $q$ odd. Let $T=(t_2,\tau)$ be a Thue-Morse turtle curve such that $\alpha_T(\phi(0))=\zeta_{2^bq}^{k_2}$, where $\phi$ is the morphism used to define $t_2$, and $b,k_2\in\mb{N}$ with $\gcd(k_2,q)=1$. Also require that $P_T(\phi(0))\neq P_T(\phi(1))$. If there exists $d\in\mb{N}$ such that $k_1\equiv 2^dk_2\pmod{q}$, then $R$ and $T$ share a nontrivial limit curve $K$, with
    $$\lim_{n\to\infty}r^{-n}\mc{P}(R[:Q^n])=K,$$
    where $Q=2^{\varphi(q)}$.
\end{theorem}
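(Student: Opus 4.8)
The plan is to establish $T \sim R$ by chaining together the three similarity results (Theorems \ref{thm:T_to_B}, \ref{thm:B_to_D}, and \ref{thm:D_to_D}) through the transitivity of $\sim$, and then to transport the limit curve of $R$ furnished by Theorem \ref{thm:dekking_convergence} over to $T$ via Theorem \ref{thm:sim_convergence}. Since all the geometric and analytic work has already been absorbed into those theorems, the argument here is essentially pure bookkeeping: matching the hypotheses of each theorem to the data of the present one in the correct order.

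First I would apply Theorem \ref{thm:T_to_B} to $T$, reading the hypothesis $\alpha_T(\phi(0)) = \zeta_{2^bq}^{k_2}$ as the instance $\alpha_T(\phi(0)) = \zeta_{q'}^{k'}$ with modulus $q' = 2^b q$ and exponent $k' = k_2$. This produces an absolute turtle curve $B = (z_{2,2^bq},\kappa)$ with $\kappa((x,y)) = P_T(\phi(x))\,\zeta_{2^bq}^{k_2 y}$ satisfying $T \sim B$. Next I would apply Theorem \ref{thm:B_to_D} with $c_0 = P_T(\phi(0))$ and $c_1 = P_T(\phi(1))$; the hypothesis $P_T(\phi(0)) \neq P_T(\phi(1))$ supplies the required $c_0 \neq c_1$, and we obtain $B \sim D_{2,2^bq,k_2}$. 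Finally I would apply Theorem \ref{thm:D_to_D} with the regular curve $R = D_{2,q,k_1}$ and $D = D_{2,q\cdot 2^b,k_2}$; the congruence $k_1 \equiv 2^d k_2 \pmod q$ is precisely its hypothesis, yielding $D_{2,2^bq,k_2} \sim R$. Transitivity of $\sim$ then gives $T \sim R$.

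The main obstacle is verifying the coprimality conditions demanded along this chain: Theorem \ref{thm:T_to_B} and the very definition of the intermediate curve $D_{2,2^bq,k_2}$ require $\gcd(k_2, 2^bq) = 1$, whereas the hypotheses provide only $\gcd(k_2,q) = 1$. Because $q$ is odd, the extra requirement amounts to $k_2$ being odd, which is not assumed. I would resolve this with a normalizing reduction: writing $k_2 = 2^e m$ with $m$ odd, reducing the root $\zeta_{2^bq}^{k_2}$ to lowest terms replaces the modulus by $N = 2^{\max(b-e,0)}q$ and the exponent by an integer coprime to $N$ (using $\gcd(m,q) = \gcd(k_2,q) = 1$ and $q$ odd). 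Since $2$ is invertible modulo the odd number $q$, a short check shows the congruence $k_1 \equiv 2^d k_2 \pmod q$ persists after absorbing the powers of $2$ into a new value of $d$. Hence we may assume from the outset that $\gcd(k_2, 2^bq) = 1$, and all three theorems apply as stated.

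With $T \sim R$ in hand, the conclusion is immediate. Theorem \ref{thm:dekking_convergence} applies to the regular curve $R$ with scaling factor $r$ and $Q = 2^{\varphi(q)}$, producing a nontrivial limit curve $K$ with $\lim_{n\to\infty} r^{-n}\mc{P}(R[:Q^n]) = K$. Since $T \sim R$, Theorem \ref{thm:sim_convergence} guarantees that $K$ is equally a limit curve of $T$. Thus $R$ and $T$ share the nontrivial limit curve $K$ with the claimed convergence, completing the proof.
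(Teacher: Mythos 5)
Your proposal is correct and follows essentially the same route as the paper: chain $T\sim B\sim D_{2,2^bq,k_2}\sim R$ via Theorems \ref{thm:T_to_B}, \ref{thm:B_to_D}, and \ref{thm:D_to_D}, then transport the limit curve of $R$ supplied by Theorem \ref{thm:dekking_convergence} over to $T$ using Theorem \ref{thm:sim_convergence}. The one place you go beyond the paper is your normalization step: the stated hypothesis gives only $\gcd(k_2,q)=1$, while Theorem \ref{thm:T_to_B} and the definition of the intermediate curve $D_{2,2^bq,k_2}$ require $\gcd(k_2,2^bq)=1$, a condition the paper's own proof invokes without comment; your reduction (rewriting $\zeta_{2^bq}^{k_2}$ in lowest terms and absorbing the stray powers of $2$ into $d$, which works because $2$ is invertible modulo the odd $q$) closes that small gap rather than taking a different route.
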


\begin{proof}
    Let $B=(z_{2,2^bq},\kappa)$ be an absolute turtle curve, where $\kappa$ is an interpreter function with $\kappa((x,y))=P_T(\phi(x))\zeta_{2^bq}^{k_2y}$ for all $(x,y)\in A_{2,2^bq}$. The turtle curve $T$ satisfies the conditions of Theorem \ref{thm:T_to_B}, so $T\sim B$. Because we required that $P_T(\phi(0))\neq P_T(\phi(1))$, the turtle curve $B$ satisfies that conditions of Theorem \ref{thm:B_to_D}, so $B\sim D_{2,2^bq,k_2}$. Using our assumption that there exists $d\in\mb{N}$ such that $k_1\equiv 2^dk_2\pmod{q}$, Theorem \ref{thm:D_to_D} tells us that $D_{2,2^bq,k_2}\sim D_{2,q,k_1}=R$. Then since $\sim$ is transitive, we have $T\sim R$. By Theorem \ref{thm:dekking_convergence}, because $R$ is a regular Dekking curve, it a nontrivial limit curve $K$, with
    $$\lim_{n\to\infty}r^{-n}\mc{P}(R[:Q^n])=K,$$
    where $Q=2^{\varphi(q)}$. Finally, because $T\sim R$, by Theorem \ref{thm:sim_convergence}, we know that $K$ is also a limit curve of $T$.
\end{proof}

If we are specifically interested in showing that certain Thue-Morse turtle curves converge to the Koch curve, then we can obtain the following corollary.

\begin{corollary}
    Let $R=D_{2,3,1}$ be a Dekking curve. Let $T=(t_2,\tau)$ be a Thue-Morse turtle curve such that $\alpha_T(\phi(0))=\zeta_{3\cdot2^b}^{k}$, where $\phi$ is the morphism used to define $t_2$, $b\in\mb{N}$, and $k$ is an integer with $\gcd(k,6)=1$. Also require that $P_T(\phi(0))\neq P_T(\phi(1))$. Then $R$ and $T$ share a nontrivial limit curve $K$, with
    $$\lim_{n\to\infty}3^{-n}\mc{P}(R[:4^n])=K.$$
\end{corollary}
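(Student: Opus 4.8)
The plan is to derive this corollary as the special case of Theorem~\ref{thm:main_result} obtained by setting $q = 3$, $k_1 = 1$, and $k_2 = k$. Essentially all of the analytic content is already packaged in Theorem~\ref{thm:main_result}; what remains is to check each of its hypotheses against the data of the corollary and to pin down the two explicit constants appearing in the conclusion, namely the scaling factor $r = 3$ and the exponent base $Q = 2^{\varphi(3)} = 4$.

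First I would verify that $R = D_{2,3,1}$ is a regular Dekking curve. Since $\gcd(2,3) = 1$ its scaling factor is defined, and as already computed in the discussion following Theorem~\ref{thm:dekking_convergence}, we have $Q = 2^{\varphi(3)} = 4$ and $r = P_R(\lambda((0,0))) = R(4) = 3$. Because $|r| = 3 > 1$ and $q = 3$ is odd, the hypotheses on $R$ in Theorem~\ref{thm:main_result} hold with $k_1 = 1$.

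Next I would match the hypotheses on $T$. The assumption $\alpha_T(\phi(0)) = \zeta_{3\cdot 2^b}^{k}$ is exactly the required form $\zeta_{2^b q}^{k_2}$ with $q = 3$ and $k_2 = k$, and the condition $P_T(\phi(0)) \neq P_T(\phi(1))$ transfers verbatim. The coprimality assumption of Theorem~\ref{thm:main_result} is $\gcd(k_2, q) = \gcd(k,3) = 1$, which follows at once from $\gcd(k,6) = 1$. It is worth noting that the stronger hypothesis $\gcd(k,6)=1$ is not wasteful: it also forces $\gcd(k,\, 3\cdot 2^b) = 1$ for every $b$, which is precisely what keeps the intermediate Dekking curve $D_{2,\,3\cdot 2^b,\,k}$ appearing inside the proof of Theorem~\ref{thm:main_result} well defined.

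The one genuinely new ingredient is exhibiting an integer $d \in \mb{N}$ with $k_1 \equiv 2^d k_2 \pmod q$, i.e. $2^d k \equiv 1 \pmod 3$. For this I would use that $2$ generates the multiplicative group $(\mb{Z}/3\mb{Z})^\times = \{1, 2\}$: since $2^0 \equiv 1$ and $2^1 \equiv 2 \pmod 3$, every residue coprime to $3$ is a power of $2$ modulo $3$. As $\gcd(k,3) = 1$, the inverse $k^{-1} \bmod 3$ is such a power, yielding the required $d$ — concretely $d = 0$ when $k \equiv 1 \pmod 3$ and $d = 1$ when $k \equiv 2 \pmod 3$. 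With every hypothesis of Theorem~\ref{thm:main_result} verified, it produces the common nontrivial limit curve $K$ together with $\lim_{n\to\infty} 3^{-n}\mc{P}(R[:4^n]) = K$, as claimed. I anticipate no real obstacle here; the only point requiring a moment's care is recognizing that the solvability of $2^d k \equiv 1 \pmod 3$ is guaranteed precisely because $k$ is a unit modulo $3$.
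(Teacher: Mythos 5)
Your proposal is correct and follows essentially the same route as the paper: specialize Theorem~\ref{thm:main_result} to $q=3$, $k_1=1$, $k_2=k$, check that $D_{2,3,1}$ is regular with $r=3$ and $Q=4$, and produce $d=0$ or $d=1$ according to whether $k\equiv1$ or $k\equiv2\pmod 3$. The only difference is cosmetic — you frame the existence of $d$ via $2$ generating $(\mb{Z}/3\mb{Z})^\times$, where the paper just checks the two cases directly.
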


\begin{proof}
    Consider Theorem \ref{thm:main_result} with $q=3$ and $k_1=1$. We have $Q=2^{\varphi(3)}=2^2=4$, and manual calculation shows that $P_D(\lambda((0,0)))=D_{2,3,1}(4)=3$. So because $|3|>1$, $R=D_{2,3,1}$ is a regular Dekking curve. Because $\gcd(k,6)=1$, either $k\equiv1\pmod{3}$ or $k\equiv2\pmod{3}$. If $k\equiv1\pmod{3}$, then $1\equiv2^0k\pmod{3}$. If $k=2$, we have $k\equiv2\pmod{3}$. So either way, there exists $d\in\mb{N}$ such that $1\equiv 2^dk\pmod{3}$. Therefore, all of the conditions of Theorem \ref{thm:main_result} are satisfied, giving the desired result.
\end{proof}

With this corollary, we have confirmed the conjecture made by Zantema in \cite{zantema}: if the sum of angles of a Thue-Morse turtle curve $T$ is of the form $\frac{k\pi}{3\cdot2^n}$ for some $n\in\mb{N}$ and $k$ with $\gcd(k,6)=1$, then $T$ will converge to the Koch curve (with the exception of the case where $P_T(\phi(0))=P_T(\phi(1))$).

\section{Acknowledgments}

This paper is based on research originally conducted through the 2023 Kenyon College Summer Science Scholars program. I would like to thank my mentor, Professor Judy Holdener, for her invaluable guidance throughout the research and writing process.

\bibliography{references}{}
\bibliographystyle{plain}

\end{document}